\newtheorem{theorem}{Theorem}[section]
\theoremstyle{definition}
\newtheorem{definition}[theorem]{Definition}
\newtheorem{example}[theorem]{Example}
\newtheorem{Lemma}[theorem]{Lemma}
\newtheorem{corollary}[theorem]{Corollary}
\newtheorem{Prop}[theorem]{Proposition}
\newtheorem{Thm}[theorem]{Theorem}
\theoremstyle{remark}
\newtheorem{remark}[theorem]{Remark}
\newtheorem{question}[theorem]{Question}
\numberwithin{equation}{section}
\newcommand{\Cal}[1]{{\mathcal #1}}
\newcommand{\Ann}{\operatorname{Ann}}
\newcommand{\Tot}{\operatorname{Tot}}
\newcommand{\Ker}{\operatorname{Ker}}
\newcommand{\Spec}{\operatorname{Spec}}
\newcommand{\Reg}{\operatorname{Reg}}
\newcommand{\Z}{\mathbb{Z}}
\newcommand{\Q}{\mathbb{Q}}
\newcommand{\cmat}{\left(\begin{array}}
\newcommand{\fmat}{\end{array}\right)}
\newcommand{\gm}{\mathfrak{m}}
\newcommand{\gp}{\mathfrak{p}}
\newcommand{\gq}{\mathfrak{q}}
\newcommand{\ga}{\mathfrak{a}}
\newcommand{\gb}{\mathfrak{b}}
\newcommand{\gc}{\mathfrak{c}}
\newcommand{\gi}{\mathfrak{i}}
\newcommand{\f}{\mathfrak}
\begin{document}

 \title[Some remarks on Pr\"ufer rings with zero-divisors]{Some remarks on Pr\"ufer rings with zero-divisors}
  

\author{Federico Campanini}
\address{Dipartimento di Matematica, Universit\`a di Padova, 35121 Padova, Italy}
\email{federico.campanini@phd.unipd.it}
 
\author{Carmelo Antonio Finocchiaro}
\address{Dipartimento di Matematica e Informatica, Università degli Studi di Catania, 95125 Catania}
\email{cafinocchiaro@unict.it}
\thanks{The first and the second author were partially supported by Dipartimento di Matematica ``Tullio Levi-Civita'' of Universit\`a di Padova (Project BIRD163492/16 ``Categorical homological methods in the study of algebraic structures'' and Research program DOR1690814 ``Anelli e categorie di moduli''). The second author was partially supported by GNSAGA,  by the project ``Proprietà algebriche locali e globali di anelli associati a curve e ipersuperfici" PTR 2016-18 - Dipartimento di Matematica e Informatica - Università di Catania, by the project PIACERI ``PLGAVA - Proprietà locali e globali di anelli e di varietà algebriche" Dipartimento di Matematica e Informatica - Università di Catania, and by the research program ``Reducing complexity in algebra, logic,
	combinatorics - REDCOM'' ``Ricerca Scientifica di Eccellenza 2018'' Fondazione Cariverona. }

\subjclass[2020]{Primary 13A15, 13B02, 13F05.}


\begin{abstract} Let $A$ be the fiber product $R\times_TB$, where $B\to T$ is a surjective ring homomorphism with regular kernel and $R\subseteq T$ is a ring extension where $T$ is an overring of $R$. In this paper we provide a characterization of when $A$ has distinguished Pr\"ufer-like properties and new constructions  of Pr\"ufer rings with zero-divisors. Furthermore we give examples of homomorphic images of Pr\"ufer rings that are Pr\"ufer without assuming that the kernel of the surjection is regular. Finally we provide some remarks on the ideal theory of  pre-Pr\"ufer rings. 
\end{abstract}

\maketitle
\section*{Introduction}
The notion of Pr\"ufer domain, introduced in 1932 by H. Pr\"ufer \cite{Prufer}, reached during last century a relevant role and a remarkable impact in the development of Multiplicative Ideal Theory.  While the notion of Dedekind domain globalizes that of discrete valuation ring, the concept of Pr\"ufer domain arises in order to globalize that of valuation domain in the non-local and non-Noetherian context. For instance the ring 
$$
{\rm Int}(\mathbb Z):=\{f(T)\in \mathbb Q[T]\mid f(\mathbb Z)\subseteq \mathbb Z \}
$$
of integer-valued polynomials over $\mathbb Z$ and the ring of entire functions on the complex plane $\mathbb{C}$ are natural examples of non-local and non-Noetherian Pr\"ufer domains. For a deeper insight on this circle of ideas and an excellent account of the work done on Pr\"ufer domains we refer the reader to \cite{fo-hu-pa,gilmer}. 

In 1970 M. Griffin (see \cite{Gr}) provided a generalization of the notion of Pr\"ufer domain for arbitrary commutative rings (see condition (5) of Definition \ref{P-like} below). It is worth recalling that several equivalent conditions defining Pr\"ufer domains are not equivalent for rings with zero-divisors (as it was observed in \cite{Ba-Gl-JA}). Such conditions define relevant distinct  ``Pr\"ufer-like" classes of rings. We will recall them here for the reader's convenience.
\begin{definition}\label{P-like}
A ring $R$ (that from now on is always assumed to be commutative with multiplicative identity) is said to be:
\begin{enumerate}
	\item
	{\em semihereditary}, if every finitely generated ideal of $R$ is projective;
	\item
	{\em of weak global dimension $\leq 1$} ($\operatorname{w.gl.dim}(R)\leq 1$), if every ideal of $R$ is flat;
	\item
	{\em arithmetical} if the lattice of all ideals of $R$ is distributive, that is, given ideals $\ga, \gb$ and $\gc$ of $R$, the following equality  $$\ga\cap (\gb +\gc)=(\ga \cap \gb)+(\ga \cap \gc)$$ holds;
	\item
	{\em a Gaussian ring} if every polynomial $f\in R[X]$ is {\em a Gaussian polynomial},  that is, for every $g \in R[X]$ one has $$c_R(f)c_R(g)=c_R(fg)$$ (here $c_R(f)$ denotes, as usual, the content of $f$, i.e., the ideal of $R$ generated by the coefficients of $f$);
	\item
	{\em a Pr\"ufer ring} if every finitely generated regular ideal of $R$ is invertible;
\end{enumerate}
\end{definition}
For the sake of simplicity we will say that a ring \emph{has Pr\"ufer condition $(n)$} ($1\leq n\leq 5$) if it satisfies condition $(n)$ of Definition \ref{P-like}. 

Note that by \cite[Theorem 3.12]{Ba-Gl-JA} such conditions are equivalent whenever the total ring of fractions of $R$ is absolutely flat (and thus, in particular, when $R$ is a domain all of them collapse to the notion of Pr\"ufer domain). 

In \cite{Boy}, the author investigates about the behaviour of the  conditions given in Definition \ref{P-like} in {\em regular conductor squares}, that is, in pullback diagrams defined as follows: let $A\subseteq B$ be a ring extension, with $\gc:=[A:_A B]=\{a \in A \mid aB \subseteq A\}$. Assume that $\gc$ is a regular ideal of $B$ and consider the following pullback diagram
\begin{equation*}
\xymatrix{
A \ar[r] \ar@{^{(}->}[d] &  R \ar@{^{(}->}[d] \\
B \ar[r]^{\pi} & T
} 
\end{equation*}
where $R=A/\gc$, $T=B/\gc$ and the bottom orizontal arrow is the canonical projection. In \cite{Boy}, the author gives a precise description of when $A$ has Pr\"ufer condition $(n)$ for $1\leq n\leq 5$. This characterization is expressed in terms of the local features of the pullback diagram. One of the goals of this paper (see Section 3) is study the transfer of Pr\"ufer-like properties in such pullback diagrams from a different point of view, namely, we characterize when the ring $A$ of the pullback diagram has some Pr\"ufer-like properties in terms of direct conditions on the given data $R,B,T$, under the reasonable (and quite natural) assumption that $T$ is an overring of $R$. This approach allow us to generalize results of \cite{FO1} and \cite{HT}, for instance, and to provide new natural examples of Pr\"ufer rings. It is worth noting that this extra assumption is actually a necessary condition in many cases where $A$ is a Pr\"ufer ring. 
For our purposes, a useful tool  will be  the notion of \emph{Pr\"ufer extension}, recently developed by M. Knebush and D. Zhang in \cite{KZ} (see Section 2  for some results of independent interest regarding Pr\"ufer extensions whose base ring is local).

 Section 4 is devoted to study homomorphic images and Pr\"ufer rings from various perspectives. As it is well known, the class of Pr\"ufer rings is not stable under quotients, but a homomorphic image of a Pr\"ufer ring is still Pr\"ufer, provided that the kernel of the ring surjection is regular. We introduce another class of ring morphisms that preserves the Pr\"ufer property under homomorphic images.  This class will be that of  \emph{regular morphisms of rings}, that is, morphisms $A\to B$ for which the inverse image of the set of regular elements of $B$ consists of regular elements of $A$.

Finally, we investigate about the ideal theory of the so called \emph{pre-Pr\"ufer rings}, that is, rings whose non-trivial homomorphic images are Pr\"ufer rings \cite{BoiShe}. We show that any two given prime ideals of a localization of a pre-Pr\"ufer ring are comparable, provided that at least one of them is regular (this is a generalization of \cite[Theorem 1.2]{BoiShe}). Furthermore, we prove that any Noetherian pre-Pr\"ufer ring has Krull dimension $\leq 1$.

\section{Notation and preliminaries}
For a ring $A$, we  denote by $\Cal U(A)$ (resp., $Z(A)$) the set of invertible elements (resp., zero-divisors) of $A$. The total ring of fractions of $A$ is denoted by $\Tot(A)$. An \emph{overring of $A$} is a subring of $\Tot(A)$ containing $A$. An element of $A$ that is not a zero-divisor is called \emph{a regular element}. The set of regular elements of $A$ is denoted by $\Reg(A)$. An ideal of $A$ is \emph{regular} if it contains a regular element. Given a ring extension $A\subseteq B$, an intermediate ring $X$ between $A$ and $B$ and $A$-submodules $F,G$ of $B$, set 
$$
[F:_XG]:=\{x\in X\mid xG\subseteq F \} .
$$
It is easily seen that $[A:_B B]=[A:_A B]$ is the biggest ideal of $A$ that is also an ideal of $B$, and it
is called \emph{the conductor of the ring extension $A\subseteq B$}. It is well knonw that, if the conductor $[A:_B B]$ is a regular ideal of $B$, then $B$ is an overring of $A$ because, if $a\in [A:_B B]$ is a regular element in $B$, then $b\mapsto \frac{ab}{a}$ defines a ring embedding $B\hookrightarrow \Tot(A)$.

If $f:A\to C,g:B\to C$ are given morphisms of rings, we denote by
$$
f\times_Cg:=A\times_CB:=\{(a,b)\in A\times B\mid f(a)=g(b) \}
$$
the fiber product of $f$ and $g$ (in the category of rings). Constructions of this type are very useful for producing examples, for instance rings whose prime spectra have to satisfy prescribed conditions. We recall now some stardard facts that will be freely used in the remaining part of the paper and we address the reader to \cite{FO1} for more details regarding this topic. 
\begin{remark}\label{fiber-product}
Let $\pi:B\to T$ be a surjective ring homomorphism and let $R$ be a subring of $T$. Consider the subring $A:=\pi^{-1}(R)$ of $B$. 
\begin{enumerate}
	\item $\ker(\pi)$ is a common ideal of $A$ and $B$. In particular, $\ker(\pi)\subseteq [A:_A B]$.
	\item $A$ is canonically isomorphic to  the fiber product of $\pi$ and of the inclusion $ R\hookrightarrow T$). 
	\item $A$ is a local ring if and only if $R$ is local and $\ker(\pi)$ is contained in the Jacobson radical of $B$ \cite[Corollary~2.4~(3)]{DFF2}.
\end{enumerate}
\end{remark}
We now recall a useful characterization of Pr\"ufer rings due to Griffin \cite{Gr}. If $\gp$ is a prime ideal of a ring $R$, the pair $(R,\gp)$ is said to have \emph{the regular total order property} if, whenever $\ga,\gb$ are ideals of $R$, one at least of which is regular, then the ideals $\ga R_{\gp}, \gb R_{\gp}$ are comparable.

\begin{Thm}\cite[Theorem 13]{Gr}\label{regular-tot-ord-prop}
	A ring $R$ is a Pr\"ufer ring if and only if  the pair $(R,\gm)$ has the regular total order property, for every maximal ideal of $R$.
\end{Thm}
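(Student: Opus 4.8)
The plan is to prove the two implications separately. The main external input is the standard fact that a finitely generated \emph{regular} ideal which is locally principal (i.e.\ $\ga R_\gm$ is principal for every maximal $\gm$) is invertible; the converse direction uses little beyond the definition $\ga^{-1}:=[R:_{\Tot(R)}\ga]$ together with the identity $\ga\ga^{-1}=R$. One also uses repeatedly the elementary observation that a regular element of $R$ remains regular in every localization $R_\gm$.

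$(\Rightarrow)$ Suppose $R$ is a Pr\"ufer ring and fix a maximal ideal $\gm$. Let $\ga,\gb$ be ideals of $R$ with $\ga$ regular, and fix a regular element $c\in\ga$. I would first reduce to finitely generated ideals: were $\ga R_\gm$ and $\gb R_\gm$ incomparable, choose $\alpha\in\ga R_\gm\setminus\gb R_\gm$ and $\beta\in\gb R_\gm\setminus\ga R_\gm$; then $\alpha\in\ga' R_\gm$ for some finitely generated $\ga'\subseteq\ga$, which we may enlarge so that $c\in\ga'$, and $\beta\in\gb'R_\gm$ for some finitely generated $\gb'\subseteq\gb$, so that $\ga'$ is finitely generated and regular while $\ga'R_\gm,\gb'R_\gm$ are again incomparable. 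Thus assume $\ga,\gb$ finitely generated, so $\gc:=\ga+\gb$ is finitely generated and regular, hence invertible by condition~(5) of Definition~\ref{P-like}. Since $\ga,\gb\subseteq\gc$, the sets $\ga\gc^{-1}$ and $\gb\gc^{-1}$ are ideals of $R$ with $\ga\gc^{-1}+\gb\gc^{-1}=\gc\gc^{-1}=R$; being comaximal, one of them, say $\ga\gc^{-1}$, is not contained in $\gm$. Picking $u\in\ga\gc^{-1}\setminus\gm$ gives $u\gc\subseteq\ga\subseteq\gc$, and since $u$ is a unit of $R_\gm$ this forces $\gc R_\gm=\ga R_\gm\supseteq\gb R_\gm$; the case $\gb\gc^{-1}\not\subseteq\gm$ is symmetric. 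Hence $(R,\gm)$ has the regular total order property.

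$(\Leftarrow)$ Assume $(R,\gm)$ has the regular total order property for every maximal ideal $\gm$, and let $\ga$ be a finitely generated regular ideal; we must show it is invertible. After enlarging the generating set we may write $\ga=(a_0,a_1,\dots,a_n)$ with $a_0$ regular. Fix $\gm$ maximal. For every $i,j$ the ideals $(a_0,a_i)$ and $(a_0,a_j)$ are regular, so by hypothesis the ideals $(a_0,a_i)R_\gm$ and $(a_0,a_j)R_\gm$ are comparable; hence the finite family $\{(a_0,a_i)R_\gm : 0\le i\le n\}$ is totally ordered, and choosing $k$ with $(a_0,a_k)R_\gm$ its largest member we get $\ga R_\gm=\sum_{i=0}^n (a_0,a_i)R_\gm=(a_0,a_k)R_\gm$. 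Applying the hypothesis once more to the regular ideal $(a_0)$ and the ideal $(a_k)$, the ideals $(a_0)R_\gm$ and $(a_k)R_\gm$ are comparable, so $(a_0,a_k)R_\gm=(a_0)R_\gm+(a_k)R_\gm$ is the larger of these two and thus principal. Therefore $\ga R_\gm$ is principal for every maximal ideal $\gm$; since $\ga$ is finitely generated and contains a regular element, it is then invertible, so $R$ is a Pr\"ufer ring.

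The step I expect to require the most care is the last one, that a finitely generated regular locally principal ideal $\ga$ is invertible. One verifies $\ga\ga^{-1}=R$ locally: for each maximal $\gm$ one has $(\ga\ga^{-1})_\gm=\ga_\gm(\ga^{-1})_\gm$ and, $\ga$ being finitely generated, $(\ga^{-1})_\gm=[R_\gm:_{\Tot(R)_\gm}\ga_\gm]$; writing $\ga_\gm=aR_\gm$ and using that a regular element $c\in\ga$ is invertible in $\Tot(R)_\gm$, one produces $y\in\Tot(R)_\gm$ with $ay=1$ and $y\ga_\gm\subseteq R_\gm$, so that $(\ga\ga^{-1})_\gm=R_\gm$ for every $\gm$ and hence $\ga\ga^{-1}=R$. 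Since this is a classical fact on invertibility in rings with zero-divisors, in writing up the argument I would most likely just cite it.
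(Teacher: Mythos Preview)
The paper does not give its own proof of this statement: Theorem~\ref{regular-tot-ord-prop} is quoted from Griffin \cite[Theorem~13]{Gr} as a preliminary fact and is used later (e.g.\ in the proof of Proposition~\ref{fib-reg}) without any argument supplied. So there is nothing in the paper to compare your attempt against.

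That said, your argument is essentially correct and follows the standard route. A couple of minor remarks: in the reduction step of $(\Rightarrow)$ you should perhaps say explicitly that the contradiction is obtained because comparability of $\ga'R_\gm$ and $\gb'R_\gm$ would force either $\alpha\in\gb R_\gm$ or $\beta\in\ga R_\gm$; and in the last paragraph the line ``$(\ga^{-1})_\gm=[R_\gm:_{\Tot(R)_\gm}\ga_\gm]$'' deserves a word of justification (this is where finite generation of $\ga$ is used, and one also needs that the image of a regular element of $R$ stays regular in $R_\gm$, so that $\Tot(R)\to\Tot(R)_\gm$ makes sense and lands in $\Tot(R_\gm)$). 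None of this is a real gap; citing the classical fact that finitely generated regular locally principal ideals are invertible is exactly what one would do here.
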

Recall that, if $R$ is any ring and $M$ is a $R$-module, the \emph{Nagata idealization of $M$} is the ring $R(+)M:=R\times M$, where addition is defined componentwise and the product is given by
$$
(r,m)(s,n):=(rs,rn+sm),\qquad \mbox{ for all }(r,m),(s,n)\in R(+)M.
$$
For further details, see \cite{Nag} or \cite[Section 25]{Huc}. 
\section{Pr\"ufer extensions}
For the developments about Pr\"ufer rings that will be presented in this article, a useful tool is the general approach to invertibility given by Knebush and Zhang in \cite{KZ}. We will now recall some terminology regarding multiplicative ideal theory of general extensions of rings. 

Let $B$ be any ring and let $A$ be a subring of $B$. 
According to \cite{KZ}, an $A$-submodule $\ga$ of $B$ is {\em $B$-regular} if $\ga B=B$.  $\ga$ is called {\em $B$-invertible} if there exists an $A$-submodule $\gb$ of $B$ such that $\ga \gb=A$. Such an $A$-module $\gb$, if it exists, is uniquely determined, that is, $\gb=[A:_B \ga],$ and it is called the {\em $B$-inverse of $\ga$}.

As it is easily seen, every $B$-invertible ideal is finitely generated and $B$-regular (see \cite[Chapter~II, Remarks~1.10]{KZ}). Furthermore, it is clear that if $B=\Tot(A)$, ``$B$-regular" (resp., ``$B$-invertible") means ``regular" (resp., ``invertible").

 With this notions in mind, it is possible to generalize the concept of Pr\"ufer ring as follows.

\begin{definition}{(\cite[Chapter II, Theorem 2.1]{KZ})}
Let $A\subseteq B$ be a ring extension. We say that {\em $A$ is a Pr\"ufer subring of $B$} if the inclusion $A\hookrightarrow B$ is a flat epimorphism (in the category of rings) and every finitely generated $B$-regular ideal of $A$ is $B$-invertible. If this is the case,  we can also say that {\em $A\subseteq B$ is a Pr\"ufer extension}.
\end{definition}

Obviously, $A\subseteq \Tot(A)$ is a Pr\"ufer extensions if and only if $A$ is a Pr\"ufer ring. In the next proposition, we collect some results on Pr\"ufer extensions that can be found in \cite[pp.~50--52]{KZ}. We will largely make use of them in the next sections.

\begin{Prop}\label{permanence} The following properties hold.
\begin{enumerate}
\item
If $R \subseteq T$ is a Pr\"ufer extension, then for every $T$-overring $S$ of $R$, $R$ is Pr\"ufer in $S$ and $S$ is Pr\"ufer in $T$.
\item
If $R\subseteq S$ and $S\subseteq T$ are Pr\"ufer extensions, then $R\subseteq T$ is a Pr\"ufer extension.
\item
Let $R\subseteq T$ be a ring extension and $\gi$ an ideal of $T$ contained in $R$. Then $R$ is Pr\"ufer in $T$ if and only if $R/\gi$ is Pr\"ufer in $T/\gi$.
\end{enumerate}
\end{Prop}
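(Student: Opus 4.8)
The plan is to reduce items (1) and (3) to a single criterion from \cite[Chapter~II]{KZ} --- that a ring extension $A\subseteq B$ is Pr\"ufer if and only if the $A$-submodule $A+Ax$ of $B$ is $B$-invertible for every $x\in B$ --- and to handle (2) by localization, which is where the genuine work lies, invoking two structural facts from \cite[Chapter~II]{KZ} that I take as granted: (a) being a Pr\"ufer extension is a local condition on the base, namely $R$ is Pr\"ufer in $T$ if and only if $R_{\gm}$ is Pr\"ufer in $T\otimes_R R_{\gm}$ for every maximal ideal $\gm$ of $R$, and over a local base ring a Pr\"ufer extension is exactly a Manis valuation pair; and (b) Pr\"ufer extensions are stable under localization of the base.

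\emph{Item (1).} Here $R\subseteq S\subseteq T$ and $R\subseteq T$ is Pr\"ufer. For ``$R$ is Pr\"ufer in $S$'': fix $s\in S$, so that $R+Rs$ is $T$-invertible, say $(R+Rs)\gh=R$ with $\gh=[R:_T(R+Rs)]$. Since $(R+Rs)S=S$, write $1=\sum_i(r_i+r_i's)\sigma_i$ with $r_i,r_i'\in R$ and $\sigma_i\in S$; then for each $x\in\gh$ we have $x=\sum_i x(r_i+r_i's)\sigma_i$ with every $x(r_i+r_i's)\in R$, hence $x\in S$. Thus $\gh\subseteq S$, the equality $(R+Rs)\gh=R$ exhibits $R+Rs$ as $S$-invertible, and the criterion yields that $R$ is Pr\"ufer in $S$. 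For ``$S$ is Pr\"ufer in $T$'': fix $t\in T$; applying the criterion to $R\subseteq T$ gives $(R+Rt)\gh=R$ with $\gh=[R:_T(R+Rt)]$, and since $S(R+Rt)=S+St$ we obtain $(S+St)(S\gh)=S\cdot(R+Rt)\gh=S$, so $S+St$ is $T$-invertible; as $t$ was arbitrary, $S$ is Pr\"ufer in $T$.

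\emph{Item (3).} Ideals of $R$ (resp.\ of $T$) containing $\gi$ correspond to ideals of $R/\gi$ (resp.\ of $T/\gi$) via $\ga\mapsto\ga/\gi$, and for every $t\in T$ one has $(R/\gi)+(R/\gi)\bar t=(R+Rt)/\gi$ inside $T/\gi$. If $R\subseteq T$ is Pr\"ufer and $\bar t\in T/\gi$, lift it to $t\in T$ and reduce $(R+Rt)\gh=R$ modulo $\gi$: this exhibits $(R/\gi)+(R/\gi)\bar t$ as $(T/\gi)$-invertible, so $R/\gi$ is Pr\"ufer in $T/\gi$. Conversely, suppose $R/\gi$ is Pr\"ufer in $T/\gi$ and let $t\in T$; from the $(T/\gi)$-invertibility of $(R+Rt)/\gi$ one lifts an inverse to an $R$-submodule $\gh$ of $T$ with $\gi\subseteq\gh$, $(R+Rt)\gh\subseteq R$ and $(R+Rt)\gh+\gi=R$; but $1\in R+Rt$ forces $\gi\subseteq(R+Rt)\gh$, whence $(R+Rt)\gh=R$. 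Thus $R+Rt$ is $T$-invertible for every $t\in T$, and $R$ is Pr\"ufer in $T$.

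\emph{Item (2).} Let $R\subseteq S$ and $S\subseteq T$ be Pr\"ufer extensions. The flat-epimorphism requirement holds because a composite of flat ring epimorphisms is again a flat ring epimorphism. For the invertibility, I would localize at a maximal ideal $\gm$ of $R$: by (b), writing $X_{\gm}:=X\otimes_R R_{\gm}$, the chain $R_{\gm}\subseteq S_{\gm}\subseteq T_{\gm}$ consists of Pr\"ufer extensions with local base $R_{\gm}$, so by (a) the situation is one of Manis valuation pairs, which I would resolve by transitivity of Manis pairs --- the Manis analogue of the classical statement that a valuation ring of a valuation ring is a valuation ring --- possibly after further localizing $S_{\gm}$ at its maximal ideals and using that the flat epimorphism $R_{\gm}\to S_{\gm}$ identifies the local rings of $S_{\gm}$ with local rings of $R_{\gm}$. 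The local--global principle (a) then gives that $R\subseteq T$ is Pr\"ufer. I expect this item --- together with the inputs (a), (b) and the transitivity of Manis pairs, which are the substantive results of \cite[Chapter~II]{KZ} and rest on the valuation-theoretic machinery developed there --- to be the real obstacle, items (1) and (3) being routine module bookkeeping once the criterion is available.
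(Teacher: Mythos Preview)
The paper does not prove this proposition at all: it is stated as a collection of facts from \cite[pp.~50--52]{KZ}, with no argument given. So there is no ``paper's own proof'' to compare against; you have written considerably more than the authors did.

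Your arguments for (1) and (3) are correct and rest on the standard two-generator criterion from \cite{KZ}. One remark on (1): in the proof that $R$ is Pr\"ufer in $S$, the detour through the decomposition $1=\sum_i(r_i+r_i's)\sigma_i$ is unnecessary. Since $1\in R+Rs$ and $\gh(R+Rs)\subseteq R$, you get $\gh=\gh\cdot 1\subseteq R\subseteq S$ immediately, so the inverse already lives in $S$. The rest is fine.

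For (2) you are candid that the argument is only a sketch, deferring to the local--global principle, stability under localization, and transitivity of Manis pairs from \cite{KZ}. That is an honest accounting of where the content lies, and it matches how the paper itself treats the proposition: as input imported wholesale from \cite{KZ} rather than reproved. If you wanted a self-contained argument, the step ``transitivity of Manis pairs --- possibly after further localizing $S_{\gm}$'' is exactly the place that needs unpacking; the passage from a Pr\"ufer extension over a local base to a single Manis pair, and the glueing back, is nontrivial in the presence of zero-divisors. But since the paper makes no attempt at this either, your proposal is already more detailed than what appears there.
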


We start by giving the generalization to arbitrary ring extensions of a well-known fact regarding invertibility of ideals of local domains. We provide the proof for the convenience of the reader. 

\begin{Prop}\label{inv-local}
	Let $A\subseteq B$ be a ring extension where $A$ is a local ring, and let $\f f:=(f_1,\ldots,f_n)A$ be a $B$-invertible $A$-submodule of $B$. Then $\f f=f_iA$, for some $1\leq i\leq n$. 
\end{Prop}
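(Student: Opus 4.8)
The plan is to exploit the defining relation of $B$-invertibility together with the fact that $A$ is local. Since $\f f = (f_1,\ldots,f_n)A$ is $B$-invertible, by definition there is an $A$-submodule $\gb$ of $B$ with $\f f \gb = A$; moreover $\gb = [A:_B \f f]$. In particular $1 \in \f f \gb$, so there exist elements $g_1,\ldots,g_n \in \gb = [A:_B\f f]$ with $1 = \sum_{i=1}^n f_i g_i$. The key observation is that each product $f_i g_j$ lies in $A$: indeed $g_j \in [A:_B\f f]$ means $g_j \f f \subseteq A$, hence $g_j f_i \in A$ for all $i$. So we have written $1$ as a sum $\sum_i f_i g_i$ of elements of $A$.

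Now I would invoke locality. Since $A$ is local with maximal ideal $\gm$, the unit $1$ cannot lie in $\gm$, so at least one summand $f_k g_k$ must be a unit of $A$ (if every $f_i g_i \in \gm$, their sum would be in $\gm$). Fix such an index $k$. Then $f_k g_k \in \Cal U(A)$, and I claim $\f f = f_k A$. One inclusion is trivial: $f_k A \subseteq \f f$. For the reverse, take any $f_j$ with $1\le j\le n$. Write $f_j = f_j \cdot 1 = f_j \cdot (f_k g_k)(f_k g_k)^{-1} = f_k \cdot \bigl(f_j g_k (f_k g_k)^{-1}\bigr)$; the scalar $f_j g_k (f_k g_k)^{-1}$ lies in $A$ because $f_j g_k \in A$ (as $g_k \in [A:_B\f f]$) and $(f_k g_k)^{-1} \in A$. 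Hence $f_j \in f_k A$ for every $j$, so $\f f = (f_1,\ldots,f_n)A \subseteq f_k A$, giving equality with $i = k$.

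The argument is short and the only subtlety is making sure every manipulation stays inside $A$ rather than drifting into $B$; this is exactly what the containments $g_j \in [A:_B\f f]$ guarantee, and why the $B$-inverse being computed as $[A:_B\f f]$ (as recalled just before the proposition) is the right thing to use. There is no real obstacle here — the statement is essentially the ring-extension analogue of the classical fact that a finitely generated invertible ideal of a local domain is principal, generated by one of the given generators, and the classical proof carries over verbatim once one tracks the module-theoretic bookkeeping.
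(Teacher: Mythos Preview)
Your proof is correct and follows essentially the same approach as the paper: write $1=\sum f_ig_i$ with $g_i\in[A:_B\f f]$, use locality of $A$ to find a summand $f_kg_k\in\Cal U(A)$, and conclude $\f f=f_kA$. The paper's version is terser (it simply asserts ``it immediately follows that $\f f=f_{\overline i}A$''), whereas you spell out the final containment explicitly, but the argument is the same.
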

\begin{proof}
By assumption, there are elements $z_1,\ldots, z_n\in [A:_B\f f]$ such that $\sum_{i=1}^nf_iz_i=1$. Note that  $f_iz_i\in A$ for each $1\leq i\leq n$ and thus, since $A$ is local, there is some index $\overline i$ such that $f_{\overline{ i}}z_{\overline{ i}}$ is a unit in $A$. It immediately follows that $\f f=f_{\overline{ i}}A$
\end{proof}

 We now exhibit two results of independent interest, regarding Pr\"ufer extensions, that  generalize those for local Pr\"ufer rings proved in \cite{Boy}.


\begin{Prop}\label{notinvprime}
Let $A$ be a local ring and let $A\subseteq B$ be a Pr\"ufer extension. Then the set of all elements of $A$ that are not invertible in $B$ is a prime ideal of $A$.
\end{Prop}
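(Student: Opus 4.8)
The plan is to verify directly that
$$
P:=\{a\in A\mid a\text{ is not invertible in }B\}
$$
is a proper ideal of $A$ whose complement $A\setminus P$ is multiplicatively closed; this is exactly what it means for $P$ to be a prime ideal. Three of the needed properties are essentially immediate. First, $1\notin P$, so $P$ is proper (and $0\in P$ because $B\ne 0$). Second, $A\setminus P$ is multiplicatively closed, since the product of two units of $B$ is again a unit of $B$. Third, $P$ is closed under multiplication by elements of $A$: if $a\in P$ and $r\in A$ were such that $ra$ is invertible in $B$, say $(ra)u=1$ with $u\in B$, then $a\,(ru)=1$, forcing $a$ to be a unit of $B$ -- a contradiction.

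The only substantial point is that $P$ is closed under addition, and this is where the Pr\"ufer hypothesis enters. I would argue by contradiction: suppose $a,b\in P$ but $a+b$ is invertible in $B$, and consider the finitely generated $A$-submodule $\f f:=aA+bA$ of $B$. Since $a+b\in\f f$ is a unit of $B$, we get $\f f B\supseteq(a+b)B=B$, so $\f f$ is a finitely generated $B$-regular ideal of $A$; as $A\subseteq B$ is a Pr\"ufer extension, $\f f$ is therefore $B$-invertible. Now Proposition \ref{inv-local} applies, since $A$ is local: it yields $\f f=aA$ or $\f f=bA$.

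In the first case, $b\in aA$, say $b=ra$ with $r\in A$, so $a+b=(1+r)a$; writing $(a+b)u=1$ with $u\in B$ gives $a\big((1+r)u\big)=1$, i.e.\ $a$ is invertible in $B$, contradicting $a\in P$. The case $\f f=bA$ is symmetric and contradicts $b\in P$. Hence $a+b\in P$, and $P$ is a prime ideal of $A$.

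The main (indeed the only) obstacle is the additive closure, and the whole argument for it hinges on reducing the two-generated module $aA+bA$ to a principal one. That reduction is precisely Proposition \ref{inv-local}, and it is the single place where the assumption that $A$ is local is used in an essential way; everything else is elementary.
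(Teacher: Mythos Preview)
Your proof is correct. Both your argument and the paper's hinge on the same key reduction: once a two-generated ideal of the local ring $A$ is known to be $B$-invertible, Proposition~\ref{inv-local} forces it to be principal, and this is where the contradiction comes from.

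The organization, however, differs. You verify directly that $P$ is closed under addition by taking $a,b\in P$ with $a+b\notin P$ and reducing $aA+bA$ to a principal ideal. The paper instead observes that $A\setminus P$ is a saturated multiplicative set, invokes Zorn's Lemma to pick a prime ideal $\mathfrak q$ maximal among those contained in $P$, and then shows $\mathfrak q=P$: if not, any $p\in P\setminus\mathfrak q$ gives (by maximality of $\mathfrak q$) elements $a\in A$, $q\in\mathfrak q$ with $pa+q$ a unit of $B$, and the two-generated ideal $(p,q)A$ is handled exactly as you handle $(a,b)A$. Your route is more elementary---it avoids Zorn altogether---while the paper's approach has the structural advantage of never having to check separately that $P$ is an ideal (primeness comes for free once $P=\mathfrak q$). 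The underlying idea is the same; your presentation is simply more direct.
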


\begin{proof}
Set $\gp:=\{a \in A \mid a \notin \Cal U(B)\}$. Then $A \setminus \gp$ is a saturated multiplicatively closed subset of $A$. By Zorn's Lemma, it is possible to find a prime ideal $\gq \in \Spec(A)$ maximal with respect to the property of being contained in $\gp$. If $\gq \subsetneq \gp$, take $p \in \gp \setminus \gq$ and consider the ideal $\ga:=pA+\gq$ of $A$. Then there exist $a \in A$ and $q \in \gq$ such that $pa+q \in \Cal U(B)$, which implies that the two-generated ideal $(p,q)A$ of $A$ is $B$-regular and hence $B$-invertible.
Since $A$ is a local ring, we have either $(p,q)A=pA$ or $(p,q)A=qA$, by Proposition \ref{inv-local}. In the first case we have that $B=(p,q)B=pB$, which is not possible since $p \in \gp$. In the second case we have $qA\subset \gq$, which implies that $p \in \gq$, a contradiction. It follows that $\gp=\gq$, hence $\gp$ is a prime ideal of $A$.
\end{proof}

\begin{corollary}\cite[Lemma 3.5]{Boy}
If $A$ is a local Pr\"ufer ring, then the set $Z(A)$ of zero-divisors is a prime ideal.
\end{corollary}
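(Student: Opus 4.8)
The plan is to deduce this immediately from Proposition \ref{notinvprime} by taking $B=\Tot(A)$. First I would recall that, as observed right after the definition of Pr\"ufer extension, $A\subseteq\Tot(A)$ is a Pr\"ufer extension precisely because $A$ is a Pr\"ufer ring; since $A$ is moreover assumed to be local, the hypotheses of Proposition \ref{notinvprime} are met with $B=\Tot(A)$. Hence the set $\{a\in A\mid a\notin\Cal U(\Tot(A))\}$ is a prime ideal of $A$.

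It then remains only to identify this set with $Z(A)$. Writing $\Tot(A)=S^{-1}A$ with $S=\Reg(A)$, an element $a\in A$ is invertible in $\Tot(A)$ if and only if it is a regular element of $A$: indeed, if $a\in\Reg(A)=S$ then $1/a\in\Tot(A)$ is an inverse of $a$, while conversely, if $a\cdot(b/t)=1$ in $S^{-1}A$ for some $b\in A$ and $t\in S$, then $uab=ut$ for some $u\in S$, and since $ut\in S=\Reg(A)$ this forces $a\in\Reg(A)$. Therefore $\{a\in A\mid a\notin\Cal U(\Tot(A))\}=A\setminus\Reg(A)=Z(A)$, and the conclusion follows.

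There is no substantial obstacle here: the statement is a direct specialization of Proposition \ref{notinvprime} to the extension $A\subseteq\Tot(A)$, recovering \cite[Lemma 3.5]{Boy}. The only point requiring (minor) care is the last identification, namely that the non-units of the total ring of fractions lying in $A$ are exactly the zero-divisors of $A$.
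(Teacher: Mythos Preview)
Your proposal is correct and matches the paper's intended argument: the corollary is stated immediately after Proposition~\ref{notinvprime} with no separate proof, so it is meant to follow by taking $B=\Tot(A)$ exactly as you do. The identification $\{a\in A\mid a\notin\Cal U(\Tot(A))\}=Z(A)$ that you spell out is the only detail left implicit in the paper.
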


\begin{Prop}
Let $A\subseteq B$ be a Pr\"ufer extension. Assume that $A$ is a local ring and that $B=A_S$ for some multiplicative closed subset of $A$. If $R$ is a subring of $B$ containing $A$, then $R$ is a local Pr\"ufer ring and $R=A_\gp$ for some prime ideal $\gp$ of $A$.
\end{Prop}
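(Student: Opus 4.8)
The plan is to establish first a \emph{valuation-type dichotomy} --- every element of $B$ either lies in $A$ or has its inverse in $A$ --- and then to read off from it both the local nature of $R$ and its description as a localization of $A$.

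The heart of the matter is the dichotomy. Since the structure map $A\to A_S=B$ is injective, $S\subseteq\Reg(A)$ and $B\subseteq\Tot(A)$. Fix $z\in B$ (in particular this covers any $z\in R$) and write $z=a/s$ with $a\in A$ and $s\in S$. The ideal $\ga:=aA+sA$ of $A$ is finitely generated and $B$-regular, because $s\in\ga$ is a unit of $B$; hence, by the Pr\"ufer hypothesis, $\ga$ is $B$-invertible, and since $A$ is local Proposition~\ref{inv-local} forces $\ga=aA$ or $\ga=sA$. If $\ga=sA$ then $a=sm$ for some $m\in A$, so $z=a/s=m\in A$. If $\ga=aA$ then $s=am$ for some $m\in A$, and since $s$ is regular so is $a$, whence $z=a/s=m^{-1}$ and $z^{-1}\in A$. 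I expect this step to be the crux: it is exactly here that the hypothesis $B=A_S$ is used, to produce a genuine ideal of $A$ on which Proposition~\ref{inv-local} can be brought to bear, and everything else is formal.

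Next I would show that $R$ is local by checking that its non-units form an ideal; only closure under addition is not immediate. If $x,y\in R$ are non-units and $x+y$ were a unit, then, replacing $x$ and $y$ by $x(x+y)^{-1}$ and $y(x+y)^{-1}$ (still non-units of $R$), we may assume $x+y=1$. By the dichotomy $x\in A$ or $x^{-1}\in A\subseteq R$; the latter would make $x$ invertible in $R$, so $x\in A$, and likewise $y\in A$. But $x+y=1$ in the local ring $A$ forces $x$ or $y$ to be a unit of $A$, hence of $R$, a contradiction. Thus the non-units of $R$ form a (proper) ideal containing every proper ideal of $R$, so $R$ is local; write $\gm_R$ for its maximal ideal.

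Finally I would identify $R$. By Proposition~\ref{permanence}(1) the extension $A\subseteq R$ is Pr\"ufer, and $\gp:=\{a\in A\mid a\notin\Cal U(R)\}$ equals $\gm_R\cap A$, so it is a prime ideal of $A$ (this is also Proposition~\ref{notinvprime} applied to $A\subseteq R$). Every element of $A\setminus\gp$ is a unit of $R$, so $A_\gp\subseteq R$; conversely, for $r\in R$ the dichotomy gives either $r\in A\subseteq A_\gp$ or $r^{-1}\in A$, and in the second case $r^{-1}$ is a unit of $R$, whence $r^{-1}\in A\setminus\gp$ and $r=(r^{-1})^{-1}\in A_\gp$. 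Therefore $R=A_\gp$. That $R$ is a Pr\"ufer ring then follows from Theorem~\ref{regular-tot-ord-prop}: $R$ being local, one only has to verify that any two ideals of $R$, at least one of which is regular, are comparable, and this is deduced from the dichotomy in the standard way by clearing denominators in a quotient of two generators.
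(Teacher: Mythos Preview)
Your argument is correct and rests on the same computation as the paper's: for $r=a/s$, the ideal $(a,s)A$ is $B$-regular, hence $B$-invertible, hence principal by Proposition~\ref{inv-local}, and the two cases give exactly your dichotomy. The paper is marginally more direct --- it assumes $S$ saturated, defines $\gp$ via Proposition~\ref{notinvprime}, and shows $R=A_\gp$ in one pass (locality then being automatic) without first isolating the dichotomy or arguing locality of $R$ separately --- but the core idea is identical.
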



\begin{proof} We can assume, without loss of generality, that the multiplicatively closed subset $S$ is saturated.
Set $\gp:=\{a \in A \mid a \notin \Cal U(R)\}$. Since $A\subseteq R$  is a Pr\"ufer extension, Proposition \ref{notinvprime} ensures that $\gp$ is a prime ideal of $A$.

It is clear that $A_\gp \subseteq R$. Let $r \in R$ and write $r=a/s$ for some $a \in A$ and $s \in S$. Since $A$ is a local ring, we have $(a,s)A=aA$ or $(a,s)A=sA$, by Proposition \ref{inv-local}. If $(a,s)A=sA$, then $a=sx$ for some $x \in A$ and $r=x/1 \in A_\gp$. If $(a,s)A=aA$, then we can write $s=a y$ for some $y \in A$ and since $S$ is saturated, both $a$ and $y$ turn out to be in $S$. We have $r=a/s=a/ay=1/y$, which implies $y \notin \gp$. Therefore, $t\in A_\gp$.
\end{proof}

\begin{corollary}\cite[Lemma 3.6]{Boy}
If $A$ is a local ring with Pr\"ufer condition $(n)$ and if $R$ is an overring of $A$, then $R$ is a local ring with Pr\"ufer condition $(n)$. Moreover, $R = A_\gp$ for some prime ideal $\gp$ of $A$.
\end{corollary}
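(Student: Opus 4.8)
The plan is to derive this Corollary from the preceding Proposition by specializing $B$ to $\Tot(A)$, and then to note that Pr\"ufer conditions (1)--(4) are inherited by localizations.

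First I would recall the standard implications among the five conditions of Definition~\ref{P-like}: for every $1\le n\le 5$, Pr\"ufer condition $(n)$ implies Pr\"ufer condition $(5)$ (see \cite{Ba-Gl-JA}). Hence $A$, having condition $(n)$, is in particular a Pr\"ufer ring, so by the observation made after the definition of Pr\"ufer extension in Section~2, the inclusion $A\subseteq\Tot(A)$ is a Pr\"ufer extension. Since $\Tot(A)=A_S$ where $S:=\Reg(A)$ is a (saturated) multiplicatively closed subset of $A$, the hypotheses of the preceding Proposition are met with $B:=\Tot(A)$. Because an overring $R$ of $A$ is, by definition, a subring of $\Tot(A)$ containing $A$, the Proposition yields at once that $R$ is a local Pr\"ufer ring and that $R=A_\gp$ for some prime ideal $\gp$ of $A$.

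It then remains only to upgrade ``$R$ is a Pr\"ufer ring'' to ``$R$ has Pr\"ufer condition $(n)$'' in the cases $1\le n\le 4$, and here I would invoke the classical fact that each of these four conditions is stable under localization: semihereditariness and $\operatorname{w.gl.dim}\le 1$ because projectivity, resp.\ flatness, of finitely generated ideals survives localization; the arithmetical property because finite intersections and sums of ideals commute with localization, so distributivity of the ideal lattice is preserved; and the Gaussian property because the content of a polynomial commutes with localization. Applying this to $R=A_\gp$ finishes the argument.

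I do not expect a genuine obstacle: the substance of the statement is already packaged in the preceding Proposition. The only points demanding a little care are the bookkeeping needed to match the hypothesis ``$B=A_S$ with $S\subseteq A$ multiplicatively closed'' against the choice $B=\Tot(A)$, $S=\Reg(A)$, and the routine verification, condition by condition, that (1)--(4) pass to localizations.
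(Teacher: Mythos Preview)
Your proposal is correct and is precisely the approach the paper intends: the Corollary is stated there without proof, immediately after the Proposition, as its specialization to $B=\Tot(A)=A_{\Reg(A)}$. Your added remark that conditions (1)--(4) pass to localizations, needed to upgrade from ``Pr\"ufer'' to ``Pr\"ufer condition $(n)$'', is the natural way to finish and matches the spirit of the surrounding arguments in the paper.
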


\section{Pr\"ufer conditions in distinguished pullbacks}

\begin{Thm}\label{pullback}
Let $\pi:B\rightarrow T$ be a surjective ring homomorphism, where $T$ is an overring of some ring $R$. Assume that $\ker(\pi)$ is a regular ideal of $B$. Set $A:=\pi^{-1}(R)$.
\begin{enumerate}
\item
$A$ is a Pr\"ufer ring if and only if both $B$ and $R$ are Pr\"ufer rings;
\item
$A$ is a Gaussian (resp. arithmetical) ring if and only if both $B$ and $R$ are Gaussian (resp. arithmetical) rings;
\item
If both $B$ and $R$ are rings with weak global dimension $\leq 1$ (resp. semi-hereditary rings), then so is $A$.
\end{enumerate}
\end{Thm}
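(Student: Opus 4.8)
The plan is to reduce everything to local statements and then exploit the Pr\"ufer extension machinery set up in Section 2 together with Remark \ref{fiber-product}. Write $\gc:=\ker(\pi)$, so by Remark \ref{fiber-product}(1) $\gc$ is a common ideal of $A$ and $B$ with $A/\gc\cong R$ and $B/\gc\cong T$; since $\gc$ is regular in $B$ we know $T$ is an overring of $R$ (as recalled in Section 1), and the hypothesis that $T$ is an overring of $R$ is exactly what makes the picture a regular conductor square. The first observation I would record is that $A\subseteq B$ is a \emph{Pr\"ufer extension as soon as $B$ is a Pr\"ufer ring and $R$ is Pr\"ufer in $T$}: indeed $A$ is Pr\"ufer in $B$ iff $A/\gc$ is Pr\"ufer in $B/\gc$ by Proposition \ref{permanence}(3), i.e. iff $R$ is Pr\"ufer in $T$; and since $\gc$ is $B$-regular, $T=\Tot(R)\cap B$-type arguments give that $R$ Pr\"ufer ring $\Leftrightarrow$ $R$ Pr\"ufer in $T$ together with $T$ Pr\"ufer. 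So the three Pr\"ufer-type conditions on $A$ will be assembled from: (i) the extension $A\subseteq B$ being Pr\"ufer, and (ii) a Pr\"ufer-type condition on $B$ itself.

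For part (1), the forward direction: assume $A$ is a Pr\"ufer ring. Then $B$ is a homomorphic image? No — rather $B$ is an overring of $A$ (because $\gc$ is regular in $B$, $B\hookrightarrow\Tot(A)$), and an overring of a Pr\"ufer ring whose conductor is regular is Pr\"ufer; more directly, $A\subseteq B$ with $B$ a ring between $A$ and $\Tot(A)$, and $A$ Pr\"ufer forces $A$ Pr\"ufer in $B$, hence $B$ Pr\"ufer by Proposition \ref{permanence}(1) applied to $R=A$, $T=\Tot(A)$, $S=B$. Then $R=A/\gc=B/\gc$ quotient by the regular ideal $\gc$ of the Pr\"ufer ring $B$... but quotients of Pr\"ufer rings need not be Pr\"ufer unless the kernel is regular — and $\gc$ \emph{is} regular in $B$, so $R$ is a Pr\"ufer ring. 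Conversely if $B$ and $R$ are both Pr\"ufer, then $R$ is Pr\"ufer in $\Tot(R)$ hence Pr\"ufer in $T$ (Proposition \ref{permanence}(1)), so by Proposition \ref{permanence}(3) $A$ is Pr\"ufer in $B$; composing $A\subseteq B\subseteq\Tot(A)$ — using that $A$ Pr\"ufer in $B$ and $B$ Pr\"ufer in $\Tot(B)\supseteq\Tot(A)$, together with Proposition \ref{permanence}(2) — yields $A$ Pr\"ufer in $\Tot(A)$, i.e. $A$ is a Pr\"ufer ring.

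For parts (2) and (3) I would pass to localizations at maximal ideals and invoke Corollary \cite[Lemma 3.6]{Boy} (the overring-of-a-local-Pr\"ufer-condition-$(n)$-ring corollary) to reduce to the local case, where the maximal ideals of $A$ correspond either to maximal ideals of $B$ not containing $\gc$ (localizations unchanged, so the condition on $A$ at such a prime matches that on $B$) or to maximal ideals of $R$ (localizations of the form $\pi^{-1}(R_{\gm})$, again a pullback of the same shape but now local). In the local case one uses the known behaviour of Gaussian/arithmetical/weak-dimension conditions in conductor squares from \cite{Boy}: arithmeticity of $A$ is equivalent to $A$ having $1$-dimensional-valuation-type ideal lattice which, modulo the regular conductor, splits into the same condition on $B$ and on $R$; Gaussian follows from the content-formula argument that localizes well; weak global dimension $\le 1$ and semihereditarity only go one way because the conductor $\gc$, while flat over $B$, need not be flat or projective over $A$, so the converse fails (hence the asymmetry in the statement of (3)). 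The hard part will be part (2): establishing that the arithmetical (equivalently Gaussian) property of $A$ forces the same on $B$ \emph{and} $R$ simultaneously and conversely, since one must carefully track how finitely generated ideals of $A$ decompose relative to $\gc$ — an ideal $I$ of $A$ with $I\subseteq\gc$ is an ideal of $B$, while $I+\gc/\gc$ is an ideal of $R$ — and verify the distributivity identity $\ga\cap(\gb+\gc)=(\ga\cap\gb)+(\ga\cap\gc)$ survives the gluing. This is the step where the regularity of $\gc$ and the overring hypothesis do the real work, and where I expect the bulk of the argument to lie.
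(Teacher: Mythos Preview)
Your argument for part (1) is essentially the paper's, modulo one slip: you write ``$R=A/\gc=B/\gc$'', but of course $B/\gc=T$; the point is that $R=A/\gc$ is the quotient of the Pr\"ufer ring $A$ (not $B$) by the ideal $\gc$, which is regular in $A$ because it is already regular in the larger ring $B$. Also, the sentence ``since $\gc$ is regular in $B$ we know $T$ is an overring of $R$'' is backwards --- regularity of $\gc$ in $B$ tells you that $B$ is an overring of $A$; that $T$ is an overring of $R$ is the separate standing hypothesis.

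For parts (2) and (3) you take a much harder route than the paper and do not actually carry it out. You propose localizing at maximal ideals of $A$, splitting into the cases $\gm\not\supseteq\gc$ and $\gm\supseteq\gc$, and then tracking how finitely generated ideals of $A$ decompose across the conductor to verify the distributive law or the content formula directly --- and you flag this as ``where I expect the bulk of the argument to lie''. The paper bypasses all of this with a single citation: by \cite[Theorem~5.7]{Ba-Gl}, a Pr\"ufer ring has Pr\"ufer condition $(n)$ (for each $n=1,2,3,4$) if and only if its total ring of quotients does. Once part (1) gives that $A$ is Pr\"ufer, and since $\Tot(A)=\Tot(B)$ inherits condition $(n)$ from $B$, the ring $A$ has condition $(n)$ immediately. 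The forward direction of (2) is then handled exactly as in (1), because the Gaussian and arithmetical properties also pass to overrings and to quotients by regular ideals. So there is no local ideal-theoretic analysis to perform; the Bazzoni--Glaz transfer result does all the work you were bracing for. Your explanation of the asymmetry in (3) (flatness of $\gc$ over $A$) is also beside the point: the paper simply does not attempt the converse there, and with the Bazzoni--Glaz approach the one-way implication is automatic.
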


\begin{proof}
The ring $A$ is defined by the following pullback diagram:

\begin{equation*}
\xymatrix{
A \ar[r] \ar@{^{(}->}[d] &  R \ar@{^{(}->}[d] \\
B \ar[r]^{\pi} & T
}
\end{equation*}

First assume that $A$ is a Pr\"ufer ring. Since $\ker(\pi)$ is contained in the conductor of the ring extension $A\subseteq B$, $B$ is an overring of $A$ and hence $B$ is a Pr\"ufer ring. Moreover, since $\ker(\pi)$ is a regular ideal, also $R$ is a Pr\"ufer ring.
In the same way, we can prove the ``only-if part'' in $(2)$.

Now assume that both $R$ and $B$ are Pr\"ufer rings. Since $T$ is an overring of $R$, it follows that $R\subseteq T$ is a Pr\"ufer extension (cf. \cite[Chapter~I, Corollary~5.3]{KZ}). By \cite[Chapter~I, Proposition~5.8]{KZ}, $A\subseteq B$ is a Pr\"ufer extension. Applying \cite[Chapter~I, Theorem~5.6]{KZ} to the Pr\"ufer extensions $A \subseteq B$ and $B\subseteq \Tot(A)=\Tot(B)$, we conclude that $A$ is a Pr\"ufer ring.

Finally, assume that both $B$ and $R$ are rings with Pr\"ufer condition $(n)$, for $n=1,2,3,4$. Then $B$ and $R$ are Pr\"ufer rings and therefore so is $A$. Moreover, $\Tot(B)=\Tot(A)$ has Pr\"ufer condition $(n)$. By \cite[Theorem~5.7]{Ba-Gl}, $A$ has Pr\"ufer condition $(n)$.

\end{proof}

{
We now want to list several applications of Theorem~\ref{pullback}. We provide some new examples and constructions of Pr\"ufer rings and we show how this theorem generalizes other classical results.
}
\begin{corollary}
 We preserve the notation and the assumptions of Theorem \ref{pullback}. 
Then $A$ is a local ring with Pr\"ufer condition $(n)$ if and only if $B$ is a local ring with Pr\"ufer condition $(n)$, $R$ is a local Pr\"ufer ring and $T$ is an overring of $R$ (see \cite[Theorem~4.1]{Boy} for the case $\ker(\pi)=[A:_A B]$).
\end{corollary}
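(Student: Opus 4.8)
The plan is to deduce the corollary directly from Theorem~\ref{pullback} together with the local characterizations established in Section~2 (namely Proposition~\ref{notinvprime} and its corollaries) and the elementary fact about localness of fiber products recorded in Remark~\ref{fiber-product}(3). Throughout I keep the notation $A=\pi^{-1}(R)$, with $\ker(\pi)$ a regular ideal of $B$ and $T$ an overring of $R$.

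\medskip

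\noindent\textbf{($\Leftarrow$).} Suppose $B$ is a local ring with Pr\"ufer condition $(n)$ and $R$ is a local Pr\"ufer ring. Since $\ker(\pi)$ is a common ideal of $A$ and $B$ (Remark~\ref{fiber-product}(1)), it is in particular contained in the Jacobson radical of the local ring $B$; combined with $R$ being local, Remark~\ref{fiber-product}(3) gives that $A$ is local. Now $R$ is a local Pr\"ufer ring and $B$ is a local ring with Pr\"ufer condition $(n)$, hence in particular $B$ is a Pr\"ufer ring; applying Theorem~\ref{pullback}(1) we get that $A$ is a Pr\"ufer ring. For the stronger conditions $n=1,2,3,4$, recall from the proof of Theorem~\ref{pullback} that $\Tot(A)=\Tot(B)$; since $B$ has Pr\"ufer condition $(n)$ and $B\subseteq\Tot(B)$, by Corollary to Proposition in Section~2 (the overring corollary, \cite[Lemma~3.6]{Boy}) $\Tot(B)$ again has Pr\"ufer condition $(n)$. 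Then the argument in the last paragraph of the proof of Theorem~\ref{pullback} — invoking \cite[Theorem~5.7]{Ba-Gl} with $A$ a Pr\"ufer ring whose total ring of fractions has condition $(n)$ — shows $A$ has Pr\"ufer condition $(n)$.

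\medskip

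\noindent\textbf{($\Rightarrow$).} Conversely, assume $A$ is a local ring with Pr\"ufer condition $(n)$. By Theorem~\ref{pullback}, $B$ and $R$ are both rings with Pr\"ufer condition $(n)$ (for $n=5$ directly by part~(1); for $n=1,2,3,4$ because $A$ being Pr\"ufer with $\Tot(A)=\Tot(B)$ forces, via \cite[Theorem~5.7]{Ba-Gl} read in the other direction, that $B$ and the homomorphic image $R=A/\ker(\pi)$ inherit condition $(n)$ — alternatively this is exactly the ``only if'' content of \cite[Theorem~5.7]{Ba-Gl}). That $T$ is an overring of $R$ is part of the standing hypothesis of Theorem~\ref{pullback}, so nothing is needed there. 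It remains to see that $B$ and $R$ are \emph{local}. Since $A$ is local with maximal ideal $\gm_A$ and $\ker(\pi)\subseteq\gm_A$, the quotient $R=A/\ker(\pi)$ is a quotient of a local ring and hence local. For $B$: the surjection $A\twoheadrightarrow A/\ker(\pi)=R$ and the inclusion $A\hookrightarrow B$ fit into the pullback square, and $B$ is obtained from the local ring $A$ as a subring of $\Tot(A)$ — more precisely $B$ is an overring of $A$ since $\ker(\pi)$ is a regular ideal of $B$ contained in the conductor (Remark~\ref{fiber-product}(1) and the discussion in Section~1). A local Pr\"ufer-type ring has all of its overrings local by the overring corollary of Section~2 (\cite[Lemma~3.6]{Boy}), whence $B$ is local. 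This closes the equivalence.

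\medskip

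\noindent\textbf{Main obstacle.} The routine parts are the localness bookkeeping via Remark~\ref{fiber-product}; the point requiring care is the transfer of the finer conditions $(n)=1,2,3,4$ in \emph{both} directions, since Theorem~\ref{pullback}(2)--(3) is not stated as a full equivalence for $n=1,2$. The clean way around this is to route everything through \cite[Theorem~5.7]{Ba-Gl}: once $A$ is known to be a Pr\"ufer ring and $\Tot(A)=\Tot(B)$, condition $(n)$ for $A$, for $B$, for $R$, and for $\Tot(A)$ are all equivalent, and the local versions then follow by appending the localness statements above. I expect this coordination between the fiber-product localness lemma and the Bazzoni--Glaz transfer theorem to be the only substantive step.
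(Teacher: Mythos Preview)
Your argument is correct and follows essentially the same route as the paper: localness is handled via Remark~\ref{fiber-product}(3) in one direction and via the overring corollary \cite[Lemma~3.6]{Boy} in the other, the Pr\"ufer property transfers through Theorem~\ref{pullback}(1), and the upgrade to condition $(n)$ goes through $\Tot(A)=\Tot(B)$ together with \cite[Theorem~5.7]{Ba-Gl}. Two small remarks: in the forward direction you only need $R$ to be a local \emph{Pr\"ufer} ring (not condition $(n)$), so the detour you sketch for $R$ is unnecessary; and the paper additionally observes that, in the local case, the condition ``$T$ is an overring of $R$'' can actually be \emph{deduced} rather than assumed (via $B=S^{-1}A$ from \cite[Lemma~3.6]{Boy}), which is the point of listing it among the equivalent conditions despite its being a standing hypothesis of Theorem~\ref{pullback}.
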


\begin{proof}  Assume that $A$ is a local ring with  Pr\"ufer condition $(n)$ (for $1\leq n \leq 5$). We can argue in the same way of \cite[Theorem~4.1]{Boy}. By \cite[Lemma~3.6]{Boy}, $B$ is a local ring with  Pr\"ufer condition $(n)$. Since $\ker(\pi)$ is a regular ideal and $A$ is a Pr\"ufer ring, $R$ is a Pr\"ufer ring (\cite[Proposition~4.4]{FCA1}. Finally, it is immediate that $T$ is an overring of $R$. Indeed, since $B$ is a localization of $A$, say $B=S^{-1} A$ (see \cite[Lemma~3.6]{Boy}), $T=\bar{S}^{-1}R$ and there are no zero-divisors of $R$ in $\bar{S}$ because $R\subseteq T$.

Conversely, if both $B$ and $R$ are local rings, then so is $A$ (see \cite[Corollary~2.4 (3)]{DFF2}).
Since $B$ and $R$ are Pr\"ufer rings, Theorem~\ref{pullback} implies that $A$ is a Pr\"ufer ring. Moreover, since $B$ has Pr\"ufer condition $(n)$, $\Tot(A)$ has Pr\"ufer condition $(n)$. By \cite[Theorem~5.7]{Ba-Gl}, $A$ has Pr\"ufer condition $(n)$.
\end{proof}

The previous result was given by Boynton for the case $\ker(\pi)=[A:_A B]$ (\cite[Theorem~4.1]{Boy}). Nevertheless, the proof of that result do not really need $\ker(\pi)$ to be the conductor of $A$ in $B$, but just a common regular ideal of the two rings. 
Thus, Theorem~\ref{pullback} can be seen as a global version of \cite[Theorem~4.1]{Boy}. Furthermore, it is worth pointing out that in the global version of \cite[Theorem~4.1]{Boy} given in \cite[Theorem~4.2]{Boy} certain conditions on some localizations are involved, while in our theorem we require for $T$ to be an overring of $R$. This hypothesis sounds indeed quite natural and allows several interesting applications of our result (this condition will be further discussed below, see from the comment before Example~\ref{Example 3.7} to Corollary~\ref{Corollary_Fontana}).

\begin{corollary}\label{R+XTX}
	Let $R$ be a ring with total quotient ring $T=\Tot(R)$. Then $R+XT[X]$ is a semihereditary ring if and only if $R$ is a semihereditary ring and $T$ is absolutely flat.
\end{corollary}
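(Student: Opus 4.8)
The plan is to recognize $A := R + XT[X]$ as the fiber product attached to a surjection with regular kernel, so that Theorem~\ref{pullback} applies directly. Set $B := T[X]$ and let $\pi : B \to T$ be evaluation at $X = 0$; then $\ker(\pi) = XT[X]$ is a regular ideal of $B$ since $X \in \Reg(T[X])$, the ring $T = \Tot(R)$ is tautologically an overring of $R$, and $\pi^{-1}(R) = R + XT[X] = A$. Hence the hypotheses of Theorem~\ref{pullback} are satisfied. The one external ingredient I will need is the classical fact that, for any ring $S$, the polynomial ring $S[X]$ is semihereditary if and only if $S$ is absolutely flat; equivalently, $\operatorname{w.gl.dim}(S[X]) = \operatorname{w.gl.dim}(S) + 1$, and $S[X]$ is coherent whenever $S$ is absolutely flat.

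For the ``if'' part, suppose $R$ is semihereditary and $T$ is absolutely flat. Then $B = T[X]$ is semihereditary by the fact just quoted, and since $B$ and $R$ are both semihereditary, Theorem~\ref{pullback}(3) gives at once that $A = R + XT[X]$ is semihereditary.

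For the ``only if'' part, assume $A$ is semihereditary. Then $A$ is a Pr\"ufer ring, so Theorem~\ref{pullback}(1) forces $R$ to be a Pr\"ufer ring; this will be upgraded to semihereditary via \cite[Theorem~3.12]{Ba-Gl-JA} (recalled after Definition~\ref{P-like}) once $T = \Tot(R)$ is shown to be absolutely flat. For the latter, I would use that $\operatorname{w.gl.dim}(A) \leq 1$ and localize $A$ at the powers of $X$, which form a multiplicatively closed set of regular elements of $A$: a direct check gives $A[X^{-1}] = T[X, X^{-1}]$, because $t = (Xt)X^{-1} \in A[X^{-1}]$ for every $t \in T$. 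Since localization cannot raise the weak global dimension, $\operatorname{w.gl.dim}(T[X, X^{-1}]) \leq 1$; combined with the standard formula $\operatorname{w.gl.dim}(T[X, X^{-1}]) = \operatorname{w.gl.dim}(T) + 1$, this forces $\operatorname{w.gl.dim}(T) = 0$, that is, $T$ is absolutely flat. Since $T = \Tot(R)$ is absolutely flat, the five Pr\"ufer conditions of Definition~\ref{P-like} become equivalent for $R$ by \cite[Theorem~3.12]{Ba-Gl-JA}, so the Pr\"ufer ring $R$ is semihereditary.

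The main point requiring care is to cite cleanly, rather than reprove, the two homological facts about polynomial and Laurent polynomial rings (semihereditariness of $S[X]$ versus absolute flatness of $S$, and the weak-global-dimension formulas for $S[X]$ and $S[X,X^{-1}]$); everything else — the pullback identification, the computation $A[X^{-1}] = T[X,X^{-1}]$, and the bookkeeping with Theorem~\ref{pullback} and \cite[Theorem~3.12]{Ba-Gl-JA} — is routine. If one prefers to avoid Laurent polynomials in the ``only if'' part, an alternative is this: the total quotient ring of a semihereditary ring is von Neumann regular, and $\Tot(A) = \Tot(B)$ since $\ker(\pi)$ is regular (as in the proof of Theorem~\ref{pullback}); hence $\Tot(T[X])$ is absolutely flat, so $T[X]$ — a Pr\"ufer ring with absolutely flat total quotient ring — is semihereditary by \cite[Theorem~3.12]{Ba-Gl-JA}, and then $T$ is absolutely flat by the cited polynomial-ring fact.
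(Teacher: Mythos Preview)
Your proof is correct, and your alternative argument for the ``only if'' direction is essentially the paper's: the paper observes that $T[X]$ is an overring of $A$ (regular conductor), hence semihereditary whenever $A$ is, so $T$ is absolutely flat by the McCarthy--Camillo characterization; then $R$, being Pr\"ufer with absolutely flat total quotient ring, is semihereditary via \cite[Theorem~5.7]{Ba-Gl}.

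Your primary ``only if'' argument takes a genuinely different route. Instead of passing to the overring $T[X]$, you localize at the powers of $X$, compute $A[X^{-1}] = T[X,X^{-1}]$, and then invoke the formula $\operatorname{w.gl.dim}(T[X,X^{-1}]) = \operatorname{w.gl.dim}(T) + 1$ to force $T$ to be absolutely flat. This works, and the localization identity is a nice concrete computation; the trade-off is that the Laurent weak-global-dimension formula, while true, is a slightly less standard citation than the polynomial-ring version the paper uses. The paper's route is marginally cleaner here because it stays entirely within the circle of results already quoted (overrings of Pr\"ufer rings, \cite{Cam}, \cite{Ba-Gl}), whereas your main route introduces one extra homological ingredient. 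Either way the argument is short and sound.
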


\begin{proof}
Recall that $T$ is absolutely flat if and only if $T[X]$ is semihereditary (cf. \cite{McC} and \cite{Cam}).
	
Assume that $R$ is a semihereditary ring and that $T$ is absolutely flat. Then $T[X]$ is semihereditary. Theorem~\ref{pullback} implies that $R+XT[X]$ is a semihereditary ring.
	
On the other hand, if $R+XT[X]$ is semihereditary, then so is $T[X]$, because the conductor of the ring extension $R+XT[X] \subseteq T[X]$ is clearly regular. In particular, $T$ is absolutely flat. Moreover $R$ is a Pr\"ufer ring by \cite[Proposition~4.4]{FCA1}. By \cite[Theorem~5.7]{Ba-Gl}, $R$ is a semihereditary ring.
\end{proof}

\begin{remark}
In \cite{Cam} it is proved that a ring $T$ is absolutely flat if and only if $T[X]$ is semihereditary if and only if $T[X]$ is arithmetical. In view of this result, it is clear that Corollary~\ref{R+XTX} can be restated replacing ``semihereditary'' with ``arithmetical'' or ``$\operatorname{w.gl.dim}(R)\leq 1$'', so that, $R+XT[X]$ is an arithmetical ring (resp. of $\operatorname{w.gl.dim}(R)\leq 1$) if and only if $R$ is an arithmetical ring (resp. of $\operatorname{w.gl.dim}(R)\leq 1$) and $T$ is absolutely flat.
\end{remark}

The next application of Theorem~\ref{pullback} concerns the so-called {\em Pr\"ufer Manis rings}. Here, we present Pr\"ufer Manis rings in a very short way tailored for our purposes.  We refer to \cite{Man}, \cite{BoiLar1} and \cite[Chapter~2]{Huc} for an exaustive description of this topic. A {\em Manis pair} $(A,\gp)$ is a pair where $A$ is a ring, $\gp$ is a prime ideal of $A$ and for every $x \in \Tot(A)\setminus A$, there exists $y \in \gp$ such that $xy \in A\setminus \gp$. Given a ring $A$ and a prime ideal $\f m$ of $A$, $A$ is called a \emph{Pr\"ufer Manis ring} if the following equivalent conditions hold (see \cite[Theorem 2.3]{BoiLar1}):
\begin{enumerate}
	\item $(A,\f m)$ is a Manis pair and $A$ is a Pr\"ufer ring.
	\item $A$ is a Pr\"ufer ring and $\f m$ is the unique regular maximal ideal of $A$.
	\item $(A,\f m)$ is a Manis pair and $\f m$ is the unique regular maximal ideal of $A$. 
\end{enumerate}

\begin{Prop}
	Let $B$ be a Pr\"ufer Manis ring and let $V$ be a valuation domain with quotient field $B/\gm$, where $\gm$ denotes the unique regular maximal ideal of $B$. Consider the canonical projection $\pi: B\rightarrow B/\gm$. Then $\pi^{-1}(V)$ is a Pr\"ufer Manis ring.
\end{Prop}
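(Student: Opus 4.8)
The plan is to set $A := \pi^{-1}(V)$ and verify the three ingredients needed to apply Theorem~\ref{pullback}, namely that $\gm = \ker(\pi)$ is a regular ideal of $B$, that $B/\gm$ is an overring of $V$, and that both $B$ and $V$ are Pr\"ufer rings; this already gives that $A$ is a Pr\"ufer ring. The regularity of $\gm$ is immediate since it is by definition the (unique) regular maximal ideal of the Pr\"ufer Manis ring $B$. The ring $V$ is a valuation domain, hence certainly a Pr\"ufer ring, and $B$ is Pr\"ufer by hypothesis. Finally $B/\gm$ is the quotient field of the domain $V$, so it equals $\Tot(V)$ and is trivially an overring of $V$. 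Thus Theorem~\ref{pullback}(1) applies and $A$ is a Pr\"ufer ring.

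Next I would identify the unique regular maximal ideal of $A$ in order to invoke characterization (2) of a Pr\"ufer Manis ring. Since $\ker(\pi) = \gm$ is a regular ideal of $B$ contained in the conductor $[A:_A B]$, the ring $B$ is an overring of $A$ and in fact $\Tot(A) = \Tot(B)$. The regular maximal ideals of $A$ should correspond, via the pullback, to the regular maximal ideals of $B$ and those of $V$: a maximal ideal of $A$ either contains $\ker(\pi)$, in which case it is the contraction of a maximal ideal of $B$, or it is the preimage $\pi^{-1}(\gn)$ of a maximal ideal $\gn$ of $V$. Because $\ker(\pi)$ is regular and contained in $\pi^{-1}(\gn)$ for every maximal ideal $\gn$ of $V$ (including the maximal ideal of the valuation domain $V$), the ideal $\gM := \pi^{-1}(\gm_V)$, where $\gm_V$ is the maximal ideal of $V$, is regular. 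One then checks that no other maximal ideal of $A$ is regular: a regular maximal ideal must survive in $\Tot(A) = \Tot(B)$, and the only regular maximal ideal of $B$ is $\gm$, whose contraction to $A$ is precisely the maximal ideal lying over the zero ideal of $V$, i.e. $\gM$ again (here I use that $\gm \subseteq A$ and $V$ is a domain, so $\gm$ corresponds to $\gm_V = (0)$... more carefully, $\gm$ itself is the preimage of $0 \in V$, which is contained in $\gm_V$, so I should take $\gM = \pi^{-1}(\gm_V)$ and observe $\gm \subsetneq \gM$ unless $V$ is a field). This bookkeeping is the step I expect to require the most care.

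Having shown that $A$ is a Pr\"ufer ring with a unique regular maximal ideal $\gM$, characterization (2) of \cite[Theorem~2.3]{BoiLar1} yields that $A = \pi^{-1}(V)$ is a Pr\"ufer Manis ring, completing the proof. An alternative to the second paragraph, which may be cleaner, is to verify directly that $(A, \gM)$ is a Manis pair: given $x \in \Tot(A) \setminus A = \Tot(B) \setminus A$, either $x \notin B$, and then the Manis property of $(B,\gm)$ provides $y \in \gm \subseteq \gM$ with $xy \in B \setminus \gm$, from which one deduces $xy \in A \setminus \gM$ (using $B \setminus \gm \subseteq A$ and that $xy$ maps to a unit of $V$); or $x \in B \setminus A$, meaning $\pi(x) \in B/\gm \setminus V = \operatorname{Frac}(V) \setminus V$, and then the valuation-domain property of $V$ gives $v \in \gm_V$ with $\pi(x) v \in V \setminus \gm_V$, and lifting $v$ to $y \in A$ with $\pi(y) = v$ puts $y \in \gM$ and $xy \in A \setminus \gM$. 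Either route, combined with the Pr\"ufer conclusion from Theorem~\ref{pullback}, finishes the argument; the main obstacle throughout is the careful matching of regular (maximal) ideals across the pullback square and keeping track of which elements land inside $A$.
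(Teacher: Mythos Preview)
Your overall plan coincides with the paper's: apply Theorem~\ref{pullback} to get that $A=\pi^{-1}(V)$ is Pr\"ufer, then verify the Manis condition for the pair $(A,\gp)$ with $\gp:=\pi^{-1}(\gm_V)$. The paper takes exactly your ``alternative'' direct route, splitting $x\in\Tot(A)\setminus A$ into the cases $x\in B$ and $x\notin B$. Your case $x\in B\setminus A$ is handled correctly and matches the paper.

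There is, however, a genuine gap in your case $x\notin B$. From the Manis property of $(B,\gm)$ you get $y\in\gm$ with $xy\in B\setminus\gm$, i.e.\ $\pi(xy)\in k^{*}$ where $k:=B/\gm$. Your two parenthetical justifications are both false: $B\setminus\gm\subseteq A$ would force $k^{*}\subseteq V$, which fails whenever $V\subsetneq k$; and $\pi(xy)$ is only a nonzero element of $k$, not in general a unit (or even an element) of $V$. So you cannot conclude $xy\in A\setminus\gp$. The paper's repair is to multiply once more: since $\pi(xy)\in k^{*}$, pick $z\in B$ with $\pi(xyz)=1$; then $yz\in\gm\subseteq\gp$ (because $y\in\gm$ and $\gm$ is an ideal of $B$) and $x(yz)\in A\setminus\gp$.

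Your first route via ``unique regular maximal ideal'' is plausible in principle, but your sketch is muddled: $\gm$, viewed as a prime of $A$, corresponds to $(0)\in\Spec(V)$, not to $\gm_V$, and it is not maximal in $A$ unless $V$ is a field; the sentence about ``the maximal ideal lying over the zero ideal of $V$'' conflates these. The direct Manis verification is cleaner once the gap above is fixed.
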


\begin{proof}
	Set $A:=\pi^{-1}(V)$ and $k:=B/\gm$. We have the following pullback diagram:
	\begin{equation*}
	\xymatrix{
		A \ar[r] \ar@{^{(}->}[d] &  V \ar@{^{(}->}[d] \\
		B \ar[r]^{\pi} & B/\gm
	}
	\end{equation*}
	By Theorem \ref{pullback}, $A$ is a Pr\"ufer ring. Let $\gp:=\pi^{-1}(\gm_V)$ be the contraction of the maximal ideal $\gm_V$ of $V$. We prove that $(A, \gp)$ is a Manis pair. It suffices to show that for every $x \in \Tot(A)\setminus A$ there exists $y \in \gp$ such that $xy \in A \setminus \gp$. Let $x \in \Tot(A)\setminus A$. We distinguish two cases:
	
	Case $(1)$: $x \in B$. Then $\pi(x) \in k \setminus V$ and so $1/\pi(x)\in \gm_V$. It means that there exists $y \in \gp$ such that $\pi(y)\pi(x)=1$. In particular, $xy \in A \setminus \gp$.
	
	Case $(2)$: $x \notin B$. Since $(B,\gm)$ is a Manis pair, there exists $y \in \gm$ such that $xy \in B \setminus \gm$. Therefore $\pi(xy) \in k^*$ and so there exists $z \in B$ such that $\pi(xyz)=1$. In particular, $yz \in \gm=\ker(\pi)\subseteq \gp$ and $xyz \in A \setminus \gp$.
\end{proof}

\begin{corollary}\cite[Theorem~2]{BoiLar} A Pr\"ufer ring is the homomorphic image of a Pr\"ufer domain
if and only if its total quotient ring is the homomorphic image of a Pr\"ufer domain.
\end{corollary}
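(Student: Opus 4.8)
The plan is to prove the two implications separately: the forward one by a routine localization argument, the converse by an application of Theorem~\ref{pullback}. For the forward implication, suppose the Pr\"ufer ring $A$ is a homomorphic image of a Pr\"ufer domain, say $\rho\colon D\to A$ is a surjection with $D$ a Pr\"ufer domain; we may assume $A\neq 0$. I would consider $S:=\rho^{-1}(\Reg(A))$, which is a multiplicatively closed subset of $D$ not containing $0$, so that $S^{-1}D$ is an overring of $D$ and hence again a Pr\"ufer domain (for instance by Proposition~\ref{permanence}(1)). Localizing $\rho$ at $S$ produces a surjection $S^{-1}D\to \rho(S)^{-1}A$; since $\rho$ is onto we have $\rho(S)=\Reg(A)$, and therefore $\rho(S)^{-1}A=\Tot(A)$. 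Thus $\Tot(A)$ is a homomorphic image of the Pr\"ufer domain $S^{-1}D$.

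For the converse, suppose $T:=\Tot(A)$ is a homomorphic image of a Pr\"ufer domain, and fix a surjection $\pi\colon D\to T$ with $D$ a Pr\"ufer domain. If $\ker(\pi)=0$, then $T\cong D$ is a domain; then $A$ is a subring of the domain $T$, and, being also a Pr\"ufer ring, it is a Pr\"ufer domain, so there is nothing left to prove. Otherwise $\ker(\pi)$ is a nonzero ideal of the domain $D$, hence a regular ideal of $D$, and $T=\Tot(A)$ is by definition an overring of $A$; so Theorem~\ref{pullback} applies to $\pi\colon D\to T$ with the role of ``$R$'' played by $A$. Setting $B:=\pi^{-1}(A)$, part~(1) of Theorem~\ref{pullback} gives that $B$ is a Pr\"ufer ring, because both $D$ and $A$ are. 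Since $B$ is a subring of the domain $D$, it is itself a domain, and a Pr\"ufer ring that is a domain is a Pr\"ufer domain; moreover $\pi$ restricts to a surjection $B\to A$, as $\pi$ maps onto $T\supseteq A$. Hence $A$ is a homomorphic image of the Pr\"ufer domain $B$.

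I do not expect a genuine obstacle. The one idea that makes the converse work is that forming the fiber product $\pi^{-1}(A)$ \emph{inside a domain} $D$ automatically produces a domain, so that Theorem~\ref{pullback}(1) upgrades ``Pr\"ufer ring'' to ``Pr\"ufer domain''. The remaining points---that $S^{-1}D$ and $B$ are domains, that the induced maps are surjective, that $\Reg(A)^{-1}A=\Tot(A)$, and the handling of the two degenerate cases---are all immediate.
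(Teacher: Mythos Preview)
Your converse argument is essentially the paper's own: apply Theorem~\ref{pullback}(1) to the pullback $\pi^{-1}(A)\subseteq D$ and observe that a Pr\"ufer subring of a domain is a Pr\"ufer domain. You are slightly more careful than the paper in separating out the degenerate case $\ker(\pi)=0$.

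The forward implication is where the two diverge: the paper does not prove it at all but simply refers to \cite{BoiLar}. Your localization argument is correct and self-contained: $S=\rho^{-1}(\Reg(A))$ is a multiplicative set avoiding $0$, so $S^{-1}D$ is an overring of the Pr\"ufer domain $D$ and hence itself a Pr\"ufer domain; surjectivity of $\rho$ forces $\rho(S)=\Reg(A)$, so the induced surjection lands on $\Tot(A)$. This is the natural proof, and it buys you a complete, internal argument rather than an external citation.
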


In their article, Boisen and Larsen provide a very elegant proof of this result. Here, we deduce the ``if-part'' from Theorem~\ref{pullback}, and we refer to \cite{BoiLar} for the proof of the other implication.

\begin{proof}
Let $R$ be a Pr\"ufer ring with total quotient ring $T$. Assume that $T$ is the homomorphic image of a Pr\"ufer domain $B$ under the ring morphism $f$. Then $\ker(f)$ is a regular ideal of $B$ and Theorem \ref{pullback} implies that $f^{-1}(R)$ is a Pr\"ufer domain.
\end{proof}

At this point, the role of the assumption that $T$ is an overring of $R$ in Theorem \ref{pullback} should be underlined. The following two examples (provided both for the case of integral domains and the case of rings with zero-divisors) show that this assumption can not be dropped in the ``if-parts'' of Theorem~\ref{pullback}.

\begin{example}\label{Example 3.7} Let $X,Y$ be two indeterminates over a field $k$. Consider the following pullback diagram:
\begin{equation*}
\xymatrix{
k+Yk(X)[Y]_{(Y)} \ar[r] \ar@{^{(}->}[d] &  k \ar@{^{(}->}[d] \\
k(X)[Y]_{(Y)} \ar[r]^{\pi} & k(X)
}
\end{equation*}
Both $k$ and $k(X)[Y]_{(Y)}$ are (local) Pr\"ufer rings, $\ker(\pi)$ is clearly a regular ideal of $k(X)[Y]_{(Y)}$, but $A:=k+Yk(X)[Y]_{(Y)}$ is not a Pr\"ufer ring. Indeed, $A$ is a local domain, by Remark \ref{fiber-product}(3), but it is not a valuation domain, since $X,X^{-1}$ are in the quotient field of $A$ but none of them belongs to $A$. 
\end{example}

\begin{example} Let $X$ be an indeterminate over $\Q$. Consider the following pullback diagram:
\begin{equation*}
\xymatrix{
\Z+X^2\Q[X] \ar[r] \ar@{^{(}->}[d] &  \Z \ar@{^{(}->}[d]\\
\Z+X\Q[X] \ar[r] & \frac{\Z+X\Q[X]}{(X^2)}
}
\end{equation*}

Both $\Z+X\Q[X]$ and $\Z$ are Pr\"ufer rings, the kernel of the bottom morphism is regular, but $\Z+X^2\Q[X]$ is not a Pr\"ufer ring (it can be easly seen applying, for instance, \cite[Theorem~1.3]{HT}). Notice that condition $(1)$ of Proposition \ref{Prop} gives an easy way to see that $\Z\subseteq \frac{\Z+X\Q[X]}{(X^2)}$ is not a Pr\"ufer extension. Indeed, for every element $\bar{f} \in Z(\frac{\Z+X\Q[X]}{(X^2)})=\frac{X\Q[X]}{(X^2)}$, we have $\Ann_\Z(\bar{f})=0$.
\end{example}

On the other hand, in the notation of Theorem~\ref{pullback}, assume that $A$ is a Pr\"ufer ring and that $\ker(\pi)$ is a regular ideal of $B$. By \cite[Chapter~1, Corollary~5.3]{KZ}, we get that $A\subseteq B$ is a Pr\"ufer extension and by \cite[Chapter~1, Proposition~5.8]{KZ}, $R$ is Pr\"ufer in $T$. As far as the total quotient rings of $R$ and $T$ is concerned, we have the following proposition.

\begin{Prop}\label{Prop}
Let $A\subseteq B$ be a Pr\"ufer extension. Then:
\begin{enumerate}
\item
$\Ann_A(x)\neq 0$ for every $x \in Z(B)$;
\item
$\Tot(A)\subseteq \Tot(B)$;
\item
if $B$ is a Pr\"ufer ring, then $\Tot(A)$ is Pr\"ufer in $\Tot(B)$;
\item
If $\Tot(A)$ is absolutely flat and $B$ is a Pr\"ufer ring, then $\Tot(A)$ $=\Tot(B)$.
\item
If $A$ is a domain and $B$ is a Pr\"ufer ring, then $\Tot(A)=\Tot(B)$. In particular, $B$ must be a domain.
\end{enumerate}
\end{Prop}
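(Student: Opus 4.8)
The plan is to prove the five items essentially in the order listed, since later parts lean on earlier ones. For $(1)$, let $x \in Z(B)$, say $xy = 0$ with $y \in B \setminus \{0\}$. The point is that a flat epimorphism $A \hookrightarrow B$ is an ``intersection-friendly'' map: I would use that $A \subseteq B$ being Pr\"ufer forces strong local control. Concretely, consider the $A$-submodule $\ga := (A :_B y) = \{a \in A \mid ay \in A\}$ — no wait, the cleaner route is: since the extension is Pr\"ufer, for the finitely generated $A$-submodule $xA + A$ of $B$ one analyzes whether it is $B$-regular; more to the point, I expect to use that $y \in B$ can be written with a ``denominator'' in $A$, i.e. there is a regular-in-the-appropriate-sense $a \in A$ with $ay \in A$ nonzero (this is where flat epimorphism enters, via $B = $ a union of localizations of $A$ locally, or via the characterization of Pr\"ufer extensions in Proposition~\ref{permanence} and the structure of $[A:_B-]$). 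Then $a y \in A$, $ay \neq 0$ (if $ay=0$ then... one argues $y$ itself would have to be killed appropriately), and $x \cdot (ay) = a(xy) = 0$, so $ay \in \Ann_A(x) \setminus \{0\}$. The honest statement is that I need the fact, provable from the definition, that every element of $B$ has a nonzerodivisor... no: rather that every element of $B$ is a fraction $b/s$ with $b \in A$ and $s$ a regular element of $A$ in the relative sense, which is exactly the content of $A\subseteq B$ being Pr\"ufer (hence flat epi) combined with local triviality — this is the main obstacle and where I would spend the most care.

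Granting $(1)$, item $(2)$ is then nearly immediate: I want a ring homomorphism $\Tot(A) \to \Tot(B)$ that is injective and restricts to the inclusion $A \hookrightarrow B$. A regular element $a$ of $A$ is a nonzerodivisor on $A$; I must show it is a nonzerodivisor in $B$, i.e. $a \notin Z(B)$. But if $a \in Z(B)$ then by $(1)$ $\Ann_A(a) \neq 0$, contradicting that $a$ is regular in $A$. Hence $\Reg(A) \subseteq \Reg(B)$, so the universal property of localization gives a canonical map $\Tot(A) = \Reg(A)^{-1}A \to \Reg(B)^{-1}B = \Tot(B)$; injectivity follows because if $a/s \mapsto 0$ in $\Tot(B)$ then $a$ is killed by a regular element of $B$, hence $a \in Z(B)$, hence by $(1)$ again $a$ is killed by a nonzero element of $A$ — I need to push a little to conclude $a/s = 0$ in $\Tot(A)$, but the nesting $Z(A) \subseteq Z(B)$ together with $(1)$ closes this. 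So $\Tot(A) \subseteq \Tot(B)$ as rings, with $\Tot(B)$ an overring-type extension.

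For $(3)$: assume $B$ is a Pr\"ufer ring, i.e. $B \subseteq \Tot(B)$ is a Pr\"ufer extension. By $(2)$ we have a chain $A \subseteq B \subseteq \Tot(B)$ inside which $A \subseteq B$ is Pr\"ufer (hypothesis) and $B \subseteq \Tot(B)$ is Pr\"ufer; by Proposition~\ref{permanence}(2) the composite $A \subseteq \Tot(B)$ is a Pr\"ufer extension. Now $\Tot(A)$ sits between $A$ and $\Tot(B)$ by $(2)$, and it is a $\Tot(B)$-overring of $A$ (every element of $\Tot(A)$ is a fraction of elements of $A$, and any such fraction that makes sense becomes invertible in... one checks $\Tot(A)$ is a $\Tot(B)$-overring in the Knebusch–Zhang sense, i.e. $A \subseteq \Tot(A) \subseteq \Tot(B)$ with the outer one Pr\"ufer — the relevant notion of overring here just needs $\Tot(A)$ to be an intermediate ring, which it is). Then Proposition~\ref{permanence}(1) applied to $A \subseteq \Tot(B)$ with intermediate ring $\Tot(A)$ yields that $\Tot(A)$ is Pr\"ufer in $\Tot(B)$, which is exactly $(3)$.

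Items $(4)$ and $(5)$ are quick specializations. For $(4)$: by $(3)$, $\Tot(A) \subseteq \Tot(B)$ is a Pr\"ufer extension, hence in particular a flat epimorphism; when $\Tot(A)$ is absolutely flat (von Neumann regular) one invokes that an absolutely flat ring admits no proper flat-epimorphic ring extensions inside its own total ring of fractions — more precisely, $\Tot(\Tot(A)) = \Tot(A)$, and $\Tot(B) \subseteq \Tot(\Tot(A)) = \Tot(A)$ since every element of $\Tot(B)$ is already a fraction over $\Tot(A)$ and fractions over an absolutely flat ring lie in the ring itself; combined with $\Tot(A)\subseteq\Tot(B)$ from $(2)$ this forces $\Tot(A) = \Tot(B)$. (Alternatively: a Pr\"ufer extension with absolutely flat bottom ring is trivial, because finitely generated regular ideals are already generated by idempotents hence equal to the whole ring, so no nontrivial invertibility can occur — I would phrase it via whichever is shortest given the cited Knebusch–Zhang facts.) For $(5)$: if $A$ is a domain then $\Tot(A)$ is a field, in particular absolutely flat, so $(4)$ gives $\Tot(A) = \Tot(B)$; since $\Tot(B)$ is then a field it has no nonzero zero-divisors, and as $B$ embeds in $\Tot(B)$ the ring $B$ is a domain. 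The main obstacle throughout is really $(1)$ — pinning down why an element of $Z(B)$ must have a nonzero annihilator back in $A$, which I would extract from the flat-epimorphism/local-localization description of Pr\"ufer extensions in Knebusch–Zhang and the already-proved Proposition~\ref{inv-local}-style local analysis; once $(1)$ is in hand, $(2)$–$(5)$ are formal consequences of Proposition~\ref{permanence}.
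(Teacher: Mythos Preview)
Your approach to parts $(2)$--$(5)$ is essentially the paper's, and your derivation of $(2)$ directly from $(1)$ (if $a\in\Reg(A)\cap Z(B)$ then $\Ann_A(a)\neq 0$, contradiction) is in fact cleaner than what the paper does there, which repeats an independent Knebusch--Zhang computation rather than invoking $(1)$.

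The genuine gap is in $(1)$. You correctly isolate the target---given a nonzero $y\in B$ with $xy=0$, produce $a\in A$ with $ay\in A\setminus\{0\}$, so that $ay\in\Ann_A(x)$---but the justifications you gesture at (``flat epimorphism'', ``$B$ is locally a localization of $A$'', the transitivity statements of Proposition~\ref{permanence}) do not by themselves furnish such an $a$. What is needed is a \emph{pointwise} characterization of Pr\"ufer extensions, and the paper uses exactly this: by \cite[Chapter~I, Theorem~5.2\,(8)]{KZ}, for every $z\in B$ one can write $1=a_1+a_2z$ with $a_1,a_2\in(A:_A z)$. Applied to $z=y$ this gives $x(1-a_1)=xa_2y=a_2(xy)=0$ with $1-a_1\in A$; either $1-a_1\neq 0$ lies in $\Ann_A(x)$, or $a_1=1$ forces $y\in A$ and then $y$ itself works. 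Your ``denominator'' heuristic can be salvaged in the same spirit once you know that $(A:_A y)B=B$ for every $y\in B$ (another consequence of the Knebusch--Zhang characterization): if $(A:_A y)\,y=0$, writing $1=\sum c_ib_i$ with $c_i\in(A:_A y)$ and $b_i\in B$ yields $y=\sum b_i(c_iy)=0$. Either way, the missing ingredient is a specific equivalent form of ``Pr\"ufer extension'' from \cite{KZ}, not a generic property of flat epimorphisms. Finally, in $(4)$ discard the parenthetical attempt to show $\Tot(B)\subseteq\Tot(\Tot(A))$---it is unjustified---and keep your first argument (an injective ring epimorphism out of an absolutely flat ring is an isomorphism), which is precisely what the paper uses.
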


\begin{proof}
$(1)$ Let $x \in Z(B)$ and pick an element $z \in Z(B)$ such that $zx=0$. By \cite[Chapter~1, Theorem~5.2 (8)]{KZ}, we can write $1=a_1+a_2z$ for suitables $a_1,a_2 \in (A:z)$. It follows that $x(1-a_1)=0$. If $1-a_1=0$, then $z \in A$, otherwise $1-a_1 \in \Ann_A(x)$.

$(2)$ It suffices to prove that $\Reg(A)\subseteq \Reg(B)$. Let $r \in \Reg(A)$ and assume that there exists $x \in Z(B)$ such that $rx=0$. By \cite[Chapter~1, Theorem~5.2 (8)]{KZ}, there exist $a_1,a_2 \in (A\colon x)$ such that $1=a_1+ x a_2$, so that $r=r a_1$. Since $r$ is a regular element of $A$, it follows that $a_1=1$, that is $x \in A$, a contradiction.

$(3)$ By assumption, both $A\subseteq B$ and $B\subseteq \Tot(B)$ are Pr\"ufer extensions. Applying \cite[Chapter1, Theorem~5.6]{KZ}, we get that $A$ is Pr\"ufer in $\Tot(B)$. From $(2)$ and \cite[Chapter1, Corollary~5.3]{KZ}, $\Tot(A)\subseteq \Tot(B)$ is a Pr\"ufer extension.

$(4)$ By $(3)$, $\Tot(A)\subseteq \Tot(B)$ is a Pr\"ufer extension, so that the inclusion $\Tot(A)\hookrightarrow \Tot(B)$ is a (flat) epimorphism. But a non-trivial epimorphism from an absolutely flat ring into an arbitrary ring is necessarily an isomorphism.

$(5)$ Immediately follows by $(4)$.
\end{proof}

In view of these results, a question arises naturally:

\begin{question}
In the notation of Theorem~\ref{pullback}, assume that $\ker(\pi)$ is a regular ideal of $B$ and that $A$ is a Pr\"ufer ring. In which cases is it possible to deduce that $T$ is an overring of $R$?
\end{question}

For instance, if $A$ is a local Pr\"ufer ring, then $B$ is a localization of $A$ (cf. \cite[Lemma 3.6]{Boy}) and a straightforward argument shows that $T$ is an overring of $R$.  Moreover, as we have seen in Proposition~\ref{Prop}, we can deduce that $T$ is an overring of $R$ also if $\Tot(R)$ is an absolutely flat ring (so, in particular, if $R$ is a domain). In this latter case, all five Pr\"ufer conditions are equivalent for $R$ (cf. \cite[Theorem~5.7]{Ba-Gl}), hence Theorem~\ref{pullback} can be restated as follows.

\begin{Prop}
Let $\pi:B\rightarrow T$ be a surjective ring homomorphism, and let $R$ be a subring of $T$. Assume that $\Tot(R)$ is an absolutely flat ring and that $\ker(\pi)$ is a regular ideal of $B$. Set $A:=\pi^{-1}(R)$. Then, for $n=1,\dots,5$, $A$ has Pr\"ufer condition $(n)$ if and only if both $B$ and $R$ have the same Pr\"ufer condition $(n)$ and $T$ is an overring of $R$.
\end{Prop}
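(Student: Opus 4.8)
The plan is to bootstrap from Theorem~\ref{pullback} and Proposition~\ref{Prop}, using the fact recalled in the Introduction (\cite[Theorem~3.12]{Ba-Gl-JA}, \cite[Theorem~5.7]{Ba-Gl}) that the five Pr\"ufer conditions coincide for a ring whose total ring of fractions is absolutely flat, together with the standard hierarchy $(1)\Rightarrow(2)\Rightarrow(3)\Rightarrow(4)\Rightarrow(5)$ among the conditions of Definition~\ref{P-like}. First I would record a few harmless facts used repeatedly: $\ker(\pi)$ is a common ideal of $A$ and $B$ contained in the conductor $[A:_A B]$; it is regular not only in $B$ but also in $A$, since a regular element of $B$ lying in the subring $A$ is regular in $A$; one has $R\cong A/\ker(\pi)$ and $T\cong B/\ker(\pi)$; and, $B$ being an overring of $A$, $\Tot(A)=\Tot(B)$.

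For the ``if'' implication, assume $B$ and $R$ both satisfy condition $(n)$ and that $T$ is an overring of $R$. Then $B$ and $R$ are Pr\"ufer rings, so $A$ is a Pr\"ufer ring by Theorem~\ref{pullback}(1). Since $B$ satisfies condition $(n)$, \cite[Theorem~5.7]{Ba-Gl} gives that $\Tot(B)$ satisfies condition $(n)$; as $\Tot(A)=\Tot(B)$ and $A$ is Pr\"ufer, a second application of \cite[Theorem~5.7]{Ba-Gl} shows that $A$ satisfies condition $(n)$. (For $n=3,4,5$ this is already part of Theorem~\ref{pullback}.)

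For the ``only if'' implication, suppose $A$ satisfies condition $(n)$; then $A$ is a Pr\"ufer ring, by the hierarchy recalled above. Since $\ker(\pi)$ is a regular ideal contained in $[A:_A B]$, the ring $B$ is an overring of $A$, so $A\subseteq B$ is a Pr\"ufer extension by \cite[Chapter~1, Corollary~5.3]{KZ}; moreover $\Tot(B)=\Tot(A)$ and, by Proposition~\ref{permanence}(1), $B$ is Pr\"ufer in $\Tot(B)$, i.e.\ $B$ is a Pr\"ufer ring. Arguing as in the ``if'' part, condition $(n)$ for $A$ forces condition $(n)$ for $\Tot(A)=\Tot(B)$ and hence for $B$. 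Next, $R\cong A/\ker(\pi)$ is the homomorphic image of the Pr\"ufer ring $A$ modulo a regular ideal, so $R$ is a Pr\"ufer ring; since $\Tot(R)$ is absolutely flat, $R$ then satisfies all of $(1)$--$(5)$, in particular condition $(n)$. It remains to prove that $T$ is an overring of $R$. Here $T\cong B/\ker(\pi)$ is likewise the homomorphic image of the Pr\"ufer ring $B$ modulo a regular ideal, hence $T$ is a Pr\"ufer ring; moreover $R$ is Pr\"ufer in $T$ by Proposition~\ref{permanence}(3) (equivalently \cite[Chapter~1, Proposition~5.8]{KZ}) applied to the Pr\"ufer extension $A\subseteq B$ and the ideal $\ker(\pi)$. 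Applying Proposition~\ref{Prop}(4) to the Pr\"ufer extension $R\subseteq T$, using that $\Tot(R)$ is absolutely flat and that $T$ is Pr\"ufer, we get $\Tot(R)=\Tot(T)$; hence $R\subseteq T\subseteq\Tot(T)=\Tot(R)$, i.e.\ $T$ is an overring of $R$.

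I expect the only non-mechanical step to be the last one: recognizing that $T$ is itself a Pr\"ufer ring (so that Proposition~\ref{Prop}(4) is available) and that the resulting equality $\Tot(R)=\Tot(T)$ is precisely what promotes the inclusion $R\subseteq T$ to ``$T$ is an overring of $R$''. Everything else is bookkeeping with the transfer results already proved, together with the two routine verifications that $\ker(\pi)$ is regular in $A$ and that $\Tot(A)=\Tot(B)$.
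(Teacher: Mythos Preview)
Your argument is correct and follows exactly the route the paper has in mind: the proposition is stated there without a separate proof, as a restatement of Theorem~\ref{pullback} in light of the discussion immediately preceding it (the passage from $A$ Pr\"ufer to $R\subseteq T$ being a Pr\"ufer extension via \cite[Chapter~1, Proposition~5.8]{KZ}, Proposition~\ref{Prop}(4) to force $\Tot(R)=\Tot(T)$, and \cite[Theorem~5.7]{Ba-Gl} to collapse the five conditions on $R$). Your write-up simply makes these steps explicit, including the routine verifications that $\ker(\pi)$ is regular in $A$ and that $\Tot(A)=\Tot(B)$.
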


In particular, we can recover a result of Houston and Taylor in the case of integral domains. We preserve the notations of the original paper.

\begin{corollary}\cite[Theorem~1.3]{HT}
Let $T$ be a domain and let $\gi$ be an ideal of $T$. Let $D$ be a domain contained in $E:=T/\gi$ and let $\pi:T \rightarrow E$ denote the canonical projecton. Then $R:=\pi^{-1}(D)$ is a Pr\"ufer ring if and only if both $D$ and $T$ are Pr\"ufer rings, $\gi$ is a prime ideal of $T$ and $D,E$ have the same field of fractions.
\end{corollary}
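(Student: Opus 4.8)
The plan is to deduce the corollary as a direct specialization of the preceding Proposition --- the restatement of Theorem~\ref{pullback} under the hypothesis that $\Tot(R)$ is absolutely flat. First I would set up the dictionary between the two statements: in the notation of that Proposition I take the surjective homomorphism to be the canonical projection $\pi\colon T\twoheadrightarrow E=T/\gi$, I take the distinguished subring of $E$ to be $D$, and I observe that the ring denoted $\pi^{-1}(D)$ there is exactly our $R$. Since $D$ is a domain, $\Tot(D)$ coincides with its field of fractions and is therefore an absolutely flat ring, so that one hypothesis of the Proposition is automatic. Since $T$ is a domain, every nonzero element of $T$ is regular, and hence $\gi=\ker(\pi)$ is a regular ideal of $T$ --- here one uses that $\gi\neq(0)$, which is (implicitly) part of the hypotheses of \cite[Theorem~1.3]{HT}, the case $\gi=(0)$ being degenerate. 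With these identifications the Proposition immediately gives (taking $n=5$): the ring $R=\pi^{-1}(D)$ is a Pr\"ufer ring if and only if both $T$ and $D$ are Pr\"ufer rings and $E$ is an overring of $D$.

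It then only remains to recognize that the condition ``$E$ is an overring of $D$'' is the same as the conjunction of ``$\gi$ is a prime ideal of $T$'' and ``$D$ and $E$ have the same field of fractions''. For one implication, if $E$ is an overring of $D$ then $E$ is a subring of $\Tot(D)$, which is a field because $D$ is a domain; hence $E$ is a domain, that is, $\gi$ is prime in $T$, and the chain $D\subseteq E\subseteq\Tot(D)$ forces the field of fractions of $E$ to equal that of $D$. Conversely, if $\gi$ is prime then $E=T/\gi$ is a domain, and if moreover $E$ and $D$ share the same field of fractions $K$, then $E\subseteq K=\Tot(D)$, so $E$ is an overring of $D$. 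Inserting this equivalence into the biconditional of the previous paragraph yields precisely the assertion.

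I do not expect a genuine obstacle: all the substantive work has already been done in Theorem~\ref{pullback} and its absolutely-flat refinement, and what remains is a translation of hypotheses plus the elementary observation about overrings of a domain. The one point that deserves to be stated carefully is the verification that $\gi$ is a regular ideal of $T$, which is exactly where the assumption that $T$ is a domain (and that $\gi$ is nonzero) is used, and which also explains why the statement tacitly presupposes $\gi$ to be a nonzero ideal.
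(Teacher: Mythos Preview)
Your proposal is correct and is exactly the approach the paper intends: the corollary is stated without proof, as an immediate specialization of the preceding Proposition (the absolutely-flat refinement of Theorem~\ref{pullback}), and your dictionary together with the elementary equivalence between ``$E$ is an overring of $D$'' and ``$\gi$ is prime with $\operatorname{Frac}(D)=\operatorname{Frac}(E)$'' is precisely the missing verification. Your remark that one must tacitly assume $\gi\neq 0$ in order for $\ker(\pi)$ to be regular is also well taken.
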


As a further specialization, we have

\begin{corollary}\label{Corollary_Fontana}(see \cite[Theorem~2.4~(3)]{FO1} for the local case) Let $D$ be a domain and let $\gm$ be a maximal ideal of $D$. Let $E$ be a domain having $D/\gm$ as field of quotients. Set $D_1:=\pi^{-1}(E)$, where $\pi:D\rightarrow D/\gm$ is the canonical projection. Then $D_1$ is a Pr\"ufer domain if and only if both $D$ and $E$ are Pr\"ufer domains.
\end{corollary}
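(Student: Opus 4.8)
The plan is to derive this as an immediate special case of Theorem~\ref{pullback}(1), after checking that its hypotheses are met in the present setting and that, here, every ``Pr\"ufer ring'' in sight is in fact a domain.

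First I would dispose of the degenerate case. If $\gm=0$, then $D$ is a field, $D/\gm=D$, and $E$ is a domain whose field of fractions is $D$ and which is contained in $D$; hence $E=D=D_1$, and the statement holds trivially since a field is a Pr\"ufer domain. So from now on I assume $\gm\neq 0$; since $D$ is a domain, this makes $\gm=\ker(\pi)$ a regular ideal of $D$.

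Next I would put the diagram in the shape demanded by Theorem~\ref{pullback}: take $B:=D$, $T:=D/\gm$ and $R:=E$. Because $E$ is a domain with field of fractions $D/\gm$, we have $\Tot(E)=D/\gm=T$, so $T$ is an overring of $R$ (indeed it equals $\Tot(R)$). With $A:=\pi^{-1}(R)=\pi^{-1}(E)=D_1$, the hypotheses of Theorem~\ref{pullback} are satisfied, and part $(1)$ yields that $D_1$ is a Pr\"ufer ring if and only if both $B=D$ and $R=E$ are Pr\"ufer rings. (Alternatively, one could read this off from the Houston--Taylor corollary above, with their $T,\gi,D$ taken to be our $D,\gm,E$, noting that the extra conditions ``$\gi$ prime'' and ``$D,E$ have the same field of fractions'' are automatic here.)

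Finally I would observe that $D_1=\pi^{-1}(E)$ is a subring of the domain $D$, hence is itself a domain, while $D$ and $E$ are domains by hypothesis; therefore, for each of these three rings, ``being a Pr\"ufer ring'' coincides with ``being a Pr\"ufer domain'', and the equivalence just obtained reads: $D_1$ is a Pr\"ufer domain if and only if both $D$ and $E$ are Pr\"ufer domains. There is essentially no obstacle; the only points requiring a moment's care are the verification that $T=D/\gm$ is an overring of $E$ (immediate from the field-of-fractions hypothesis) and the separate handling of $\gm=0$ so that $\ker(\pi)$ is genuinely a regular ideal when Theorem~\ref{pullback} is invoked.
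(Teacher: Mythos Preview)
Your approach is essentially the paper's own: the corollary is stated without proof, introduced as ``a further specialization'' of the Houston--Taylor corollary (hence of Theorem~\ref{pullback}), and you correctly instantiate Theorem~\ref{pullback}(1) with $B=D$, $T=D/\gm$, $R=E$, noting that $T=\Tot(E)$ is an overring of $E$ and that $D_1\subseteq D$ is a domain.

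One small slip in your degenerate case: if $\gm=0$ then $D$ is a field and $E$ is a domain with field of fractions $D$, but this does \emph{not} force $E=D$ (e.g.\ $D=\mathbb{Q}$, $E=\mathbb{Z}$). What is true is that $\pi$ is the identity, so $D_1=\pi^{-1}(E)=E$; since $D$ is a field it is automatically Pr\"ufer, and the equivalence ``$D_1$ Pr\"ufer $\iff$ $D$ and $E$ both Pr\"ufer'' reduces to ``$E$ Pr\"ufer $\iff$ $E$ Pr\"ufer'', which is indeed trivial. So your conclusion in that case is correct, just the justification needs this minor fix.
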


We conclude this section with another result concerning the Pr\"ufer property for a distinguished class of fiber products.

\begin{Prop}\label{fib-reg}
Let $f:A\rightarrow C$ and $g:B\rightarrow C$ be ring morphisms, and assume that $\ker(f),\ker(g)$ are regular ideals of $A$, $B$, respectively.  Then the following conditions are equivalent:
\begin{enumerate}
\item
$f\times_C g$ is a Pr\"ufer ring
\item
$A$ and $B$ are Pr\"ufer rings and $f\times_C g = A\times B$.
\end{enumerate}
\end{Prop}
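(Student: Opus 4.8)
The plan is to reduce Proposition~\ref{fib-reg} to Theorem~\ref{pullback} by realizing the fiber product $A':=f\times_C g$ as a pullback of the kind treated there, and then to extract the extra information ``$A'=A\times B$'' from the regularity hypotheses on the two kernels. First I would set up notation: write $A'=f\times_C g\subseteq A\times B$, let $p_A\colon A'\to A$, $p_B\colon A'\to B$ be the restrictions of the two projections, and let $C':=f(A)+g(B)\subseteq C$ be the subring of $C$ generated by the two images (note $C'$ need not be all of $C$). The key structural remark is that $p_A\colon A'\to A$ is surjective onto $A$ precisely when $g(B)\supseteq f(A)$, which in general fails; so instead I would work with the surjection $\pi:=g\colon B\twoheadrightarrow g(B)=:T_0$ and the subring $R_0:=f(A)\cap T_0$... actually the cleanest route is: observe that $A'$ is the pullback of $A\xrightarrow{f} C'\xleftarrow{g} B$, and since $\ker(f)$ is a common ideal of $A$ and (after identification) of $A'$, and likewise $\ker(g)$, one can feed this into Theorem~\ref{pullback} once one checks the overring hypothesis.

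The main work, and the expected main obstacle, is the implication (1)$\Rightarrow$(2), and within it the claim $f\times_C g=A\times B$. I would argue as follows. Suppose $A'$ is a Pr\"ufer ring. Since $\ker(g)\times_C g$... more concretely: identify $\ker(f)$ with the ideal $\ker(f)\times\{0\}$? No — the subtlety is that $\ker(f)\times\{0\}$ need not lie in $A'$. The correct common ideals are $\ker(p_A)=\{0\}\times\ker(g)$ and $\ker(p_B)=\ker(f)\times\{0\}$, and these \emph{do} lie in $A'$. Since $\ker(g)$ is regular in $B$, pick $b_0\in\ker(g)\cap\Reg(B)$; then $(0,b_0)\in A'$, and I claim $(0,b_0)\in\Reg(A')$: if $(0,b_0)(x,y)=(0,b_0 y)=(0,0)$ then $y=0$ since $b_0$ is regular, but then $x$ could be anything with $f(x)=0$, so $(0,b_0)$ is \emph{not} regular unless $\ker(f)=0$. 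This is exactly the point where the hypothesis that \emph{both} kernels are regular must be used jointly with Griffin's regular total order property (Theorem~\ref{regular-tot-ord-prop}): the two ideals $\ker(p_A)=\{0\}\times\ker(g)$ and $\ker(p_B)=\ker(f)\times\{0\}$ of $A'$ have intersection $\{0\}$, yet after localizing at any maximal ideal one of them is regular (since $\ker(f),\ker(g)$ are regular in $A,B$ and these regularity witnesses survive in $A'$ as elements regular modulo the \emph{other} factor), forcing the regular one to be contained in the other, hence to be zero locally, hence — summing over maximal ideals — one of $\ker(p_A),\ker(p_B)$ is zero. Wait: that would say $A'\cong A$ or $A'\cong B$, which is wrong; the resolution is that at a given maximal ideal $\gm$ of $A'$ one of the two ideals is zero, so $A'_\gm$ is a localization of $A$ or of $B$, and globally one concludes $A'\to A\times B$ is a localization/open-cover statement giving surjectivity. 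So the precise lemma I must prove is: if $I,J$ are ideals of a Pr\"ufer ring $S$ with $I\cap J=0$ and $S/I$, $S/J$ both have a regular element lifting appropriately, then $S\cong S/I\times S/J$; I expect this to follow from the regular total order property plus a Chinese-remainder / idempotent-splitting argument (the element realizing $1$ modulo $I$ and $0$ modulo $J$ exists after checking comparability locally).

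Once $f\times_C g=A\times B$ is established, the rest is easy: a finite product of rings is Pr\"ufer (has Pr\"ufer condition $(5)$) if and only if each factor is, because the regular total order property (Theorem~\ref{regular-tot-ord-prop}) is checked maximal-ideal-locally and every maximal ideal of $A\times B$ comes from a maximal ideal of exactly one factor, at which the localization of $A\times B$ agrees with the corresponding localization of that factor. This gives (1)$\Rightarrow$(2) and simultaneously (2)$\Rightarrow$(1), since if $A,B$ are Pr\"ufer then $A\times B$ is Pr\"ufer, and the hypothesis in (2) says $f\times_C g$ equals this. For a cleaner write-up I may instead deduce the product decomposition directly: feed $\pi:=g$, with $T:=g(B)$ and $R:=f(A)\cap g(B)$, into Theorem~\ref{pullback}, noting $\ker(g)$ regular gives $T$ an overring of $R$ automatically via the conductor argument in Section~1, so $f\times_C g$ being Pr\"ufer forces $g(B)=f(A)\cap g(B)$, i.e.\ $g(B)\subseteq f(A)$, and symmetrically $f(A)\subseteq g(B)$, whence $f(A)=g(B)$ and the pullback along an \emph{equality} of surjections' images collapses — but the subtlety about $f(A)=g(B)$ not immediately yielding $A'=A\times B$ still needs the regularity of the kernels to split. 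The hard part, to summarize, is precisely this splitting step; everything else is bookkeeping with Theorem~\ref{pullback} and Theorem~\ref{regular-tot-ord-prop}.
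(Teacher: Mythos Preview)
Your overall architecture is right: set $D=f\times_C g$, note that $\ker(p_A)=\{0\}\times\ker(g)$ and $\ker(p_B)=\ker(f)\times\{0\}$ are ideals of $D$ with zero intersection and sum equal to the conductor $\mathfrak c=\ker(f)\times\ker(g)$ of $D$ in $A\times B$, and then try to use Griffin's regular total order property (Theorem~\ref{regular-tot-ord-prop}) at a maximal ideal $\mathfrak m\supseteq\mathfrak c$ to force a contradiction. But there is a genuine gap at the very step you flagged. You observe correctly that $(0,b_0)$ is \emph{not} regular in $D$ as soon as $\ker(f)\neq 0$; exactly the same reasoning shows that $(a_0,0)$ is not regular either. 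So neither $\{0\}\times\ker(g)$ nor $\ker(f)\times\{0\}$ is a regular ideal of $D$, and this does not improve after localizing at $\mathfrak m$: your parenthetical ``one of them is regular at each maximal ideal'' is simply false, and with it the comparison argument collapses. The ``lemma'' you propose about ideals $I,J$ with $I\cap J=0$ in a Pr\"ufer ring is exactly the missing content, and your sketch does not supply a proof of it.

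The fix --- and this is precisely what the paper does --- is to manufacture a regular element by combining the two witnesses: take $a\in\ker(f)\cap\Reg(A)$ and $b\in\ker(g)\cap\Reg(B)$; then $(a,b)\in\mathfrak c\subseteq D$ and $(a,b)$ \emph{is} regular in $A\times B$, hence in $D$, and its image stays regular in the localization $D_{\mathfrak m}$. Now apply Theorem~\ref{regular-tot-ord-prop} not to the two kernel ideals but to the pair of principal ideals $(a,b)D_{\mathfrak m}$ and $(a,0)D_{\mathfrak m}$: if $(a,b)=(a,0)(\rho,\sigma)$ then $b$ is killed in $B_{S_B}$, impossible; if $(a,0)=(a,b)(\eta,\zeta)$ then regularity of $a,b$ forces $(\eta,\zeta)=(1,0)$, whence $1=0$ in $C_{S_C}$, contradicting $\mathfrak c\subseteq\mathfrak m$. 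This is the one idea your write-up is missing. (Your alternative route through Theorem~\ref{pullback} does not work as written either: that theorem requires $T$ to be an overring of $R$ as an input hypothesis and says nothing that would let you conclude $f(A)=g(B)$, so it cannot by itself produce the splitting $D=A\times B$.)
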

\begin{proof}
$(2) \Rightarrow (1)$ The product of Pr\"ufer rings is a Pr\"ufer ring, by \cite[Proposition 3]{gi-hu}.

$(1)\Rightarrow(2)$. First, it is easy to verify that the conductor of the ring extension $D:= f\times_C g\subseteq A\times B$ is $\f c:= \ker(f)\times \ker(g)$. Assume, by contradiction, that $D\subsetneq A\times B$, that is, $\f c$ is a proper ideal of $D$, and let $\f m$ be a maximal ideal of $D$ containing $\f c$. If $S_A$ (resp., $S_B$) is the image of $D\setminus \f m$ under the natural morphism $D\to A$ (resp., $D\to B$), $S_C:=f(S_A)=g(S_B)$ and $\widetilde{f}:A_{S_A}\to C_{S_C}$, $\widetilde{g}:B_{S_B}\to C_{S_C}$ are the morphisms induced by $f,g$, respectively, on the localizations, then $D_{\f m}$ is canonically isomorphic to $R:=\widetilde{f}\times_{C_{S_C}}\widetilde{ g}$, by \cite[Proposition 1.9]{FO1}. By assumptions there are regular elements $a\in \ker(f), b\in \ker(g)$ and, in particular, their images  $a/1\in A_{S_A}$ and $b/1 \in B_{S_B}$ are regular. It follows that the element $(a/1,b/1)\in R$ is regular. According, to \cite[Theorem 13]{Gr}, the ideals $(a/1,b/1)R, (a/1,0)R$ of $R$ are comparable. If there is $(\rho, \sigma)\in R$ such that $(a/1,b/1)=(a/1,0)(\rho,\sigma)$, we have $b/1=0$ in $B_{S_B}$, in particular, against the fact that $b$ is regular in $B$. On the other hand, if there is an element $(\eta,\zeta)\in R$ such that $(a/1,0)=(a/1,b/1)(\eta, \zeta)$. Keeping in mind that $a/1,b/1$ are regular in $A_{S_A}, B_{S_B}$, respectively, it follows that $\eta=1,\zeta=0$ and, since $(\eta,\zeta)\in R$, 
$$
1=\widetilde{ f}(\eta)=\widetilde{ g}(\zeta)=0.
$$
This is a contradiction because since $\f c\subseteq \f m$ we easily infer that $0\notin S_C$. This proves that $f\times_Cg=A\times B$ and, by (1), it is Pr\"ufer. The conclusion follows from \cite[Proposition 3]{gi-hu}.
\end{proof}

\begin{corollary}
Preserve the notation and the assumptions of Proposition \ref{fib-reg}. Then, for $n=1,\dots, 5$ the following conditions are equivalent.
\begin{enumerate}
	\item $f\times_Cg$ has Pr\"ufer condition $(n)$. 
	\item $A,B$ have Pr\"ufer condition $(n)$ and $f\times_Cg=A\times B$. 
\end{enumerate}
\end{corollary}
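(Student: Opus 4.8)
The plan is to reduce the corollary to the Proposition \ref{fib-reg} already proved together with the standard transfer machinery for the finer Pr\"ufer conditions $(n)$, $1 \le n \le 5$. First I would observe that the implication $(2) \Rightarrow (1)$ is immediate: if $A$ and $B$ have Pr\"ufer condition $(n)$, then by \cite[Proposition~3]{gi-hu} (or the elementary fact that each of conditions $(1)$--$(5)$ is preserved under finite products) the product $A \times B = f\times_C g$ has Pr\"ufer condition $(n)$ as well. So the substance is the implication $(1)\Rightarrow(2)$.

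For $(1)\Rightarrow(2)$, suppose $f\times_Cg$ has Pr\"ufer condition $(n)$. Since condition $(n)$ implies condition $(5)$ (every one of semihereditary, $\operatorname{w.gl.dim}\le 1$, arithmetical, Gaussian is stronger than ``Pr\"ufer ring'' in the Griffin sense), $f\times_C g$ is in particular a Pr\"ufer ring. Now Proposition \ref{fib-reg} applies verbatim under our hypotheses on $\ker(f),\ker(g)$, and it yields $f\times_C g = A\times B$; this already gives the second clause of $(2)$. It remains to show that $A$ and $B$ individually have Pr\"ufer condition $(n)$. Since $f\times_Cg = A\times B$ has condition $(n)$ and $A$, $B$ are (canonically isomorphic to) the factor rings of this product, I would invoke that each of conditions $(1)$--$(4)$, like condition $(5)$, localizes and passes to direct factors of a finite product; concretely, if $A\times B$ has condition $(n)$ then so does its quotient $A \times B \to A$ and $A\times B \to B$ — here one uses that $A$ is a direct summand, equivalently a localization of $A\times B$ at the idempotent, so flatness/projectivity/distributivity/Gaussianity all descend. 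Hence $A$ and $B$ have Pr\"ufer condition $(n)$, completing $(1)\Rightarrow(2)$.

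Alternatively, and perhaps more in the spirit of the other corollaries in this section, one can route the argument through the total quotient ring: since $\ker(f),\ker(g)$ are regular, $\Tot(A\times B) = \Tot(A)\times\Tot(B)$, and by Proposition \ref{fib-reg} applied to condition $(5)$ we get $f\times_C g = A\times B$ and that $A$, $B$ are Pr\"ufer rings. Then $\Tot(f\times_C g) = \Tot(A)\times\Tot(B)$ has condition $(n)$ as soon as $f\times_C g$ does, and conversely \cite[Theorem~5.7]{Ba-Gl} promotes ``Pr\"ufer ring plus $\Tot$ has condition $(n)$'' to ``has condition $(n)$'' for each of $A$ and $B$ separately — exactly the pattern already used in the proof of Theorem \ref{pullback}. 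This is the cleanest packaging because it reuses machinery the paper has already set up.

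The only mild obstacle is bookkeeping: one must make sure that all the ingredients of Proposition \ref{fib-reg} genuinely survive when $(1)$ is weakened from ``Pr\"ufer ring'' to ``Pr\"ufer condition $(n)$'' — but since each condition $(n)$ implies the Pr\"ufer-ring condition, Proposition \ref{fib-reg} is applied as a black box with no change, and the extra work is confined to the elementary observation that conditions $(1)$--$(5)$ are stable under finite products and under passage to direct factors. I expect the write-up to be short, essentially the two displayed implications above with a sentence of justification each.
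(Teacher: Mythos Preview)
Your proposal is correct and matches the paper's own argument: the paper's proof is a single sentence observing that $A\times B$ has Pr\"ufer condition $(n)$ if and only if both $A$ and $B$ do, which is exactly the fact you isolate (together with the implicit reduction to Proposition~\ref{fib-reg} via $(n)\Rightarrow(5)$). Your alternative route through $\Tot(A\times B)$ and \cite[Theorem~5.7]{Ba-Gl} is a valid repackaging but not needed; the direct-factor observation is already enough and is what the paper uses.
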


\begin{proof}
It suffices to notice that for rings $A$ and $B$, $A\times B$ has Pr\"ufer condition $(n)$ if and only if both $A$ and $B$ have the same Pr\"ufer condition $(n)$.
\end{proof}

\begin{remark}
Let $R\subseteq T$ be a ring extension with conductor $\Gamma$. Then $R$ is isomorphic to the fiber product $\pi\times_{T/\Gamma} \iota$, where $\pi:T\rightarrow T/\Gamma$ is the canonical projection and $\iota: R/\Gamma \hookrightarrow T/\Gamma$ is the canonical embedding. So, $R$ can be viewed as a subring of $(R/\Gamma) \times T$ and the conductor of the ring extension $R\cong \pi\times_{T/\Gamma} \iota \subseteq (R/\Gamma) \times T$ is $0\times \Gamma$, which is never regular in $(R/\Gamma) \times T$.
\end{remark}

\section{Homomorphic images and  Pr\"ufer rings}
\subsection{Regular morphisms}
According to \cite[Proposition~4.4]{FCA1}, if $A$ is a Pr\"ufer ring and $\ga$ is an ideal of $A$, then $A/\ga$ is a Pr\"ufer ring whenever $\ga$ is regular. As we have seen, this fact is crucial in the proof of Theorem~\ref{pullback}. The first goal of this section is to present a notion that allows to consider homomorphic images of Pr\"ufer rings that are still Pr\"ufer, without taking regular ideals. Our definition mimic that of local morphisms (recall that a ring morphism $f:A\rightarrow B$ is said to be {\em local} if $f^{-1}(\Cal U(B))\subseteq \Cal U(A)$, that is, if for every $a \in A$, $f(a)$ is invertible in $B$ if and only if $a$ is invertible in $A$ \cite{CD}).

\begin{definition}
Let $f:A\rightarrow B$ be a ring morphism. We say that $f$ is a {\em regular morphism} if $f^{-1}(\Reg(B))\subseteq \Reg(A)$. We say that $B$ is a {\em regular homomorphic image} of $A$ if there exists a surjective regular morphism $f:A \rightarrow B$.
\end{definition}

Let us start with the following lemma. Its proof is straightforward and thus it is left to the reader. 

\begin{Lemma}\label{Lemma1} Let $f:A\rightarrow B$ be a ring morphism. Then the following properties hold. 

\begin{enumerate}
\item
$B$ is a regular homomorphic image of $A$ via $f$ if and only if $f$ is surjective and $Z(A)\subseteq f^{-1}(Z(B))$.
\item
If $f$ is a local morphism and $B$ is a total ring of quotients, then $f$ is a regular morphism.
\item
If $f$ is a regular morphism and $A$ is a total ring of quotients, then $f$ is a local morphism.
\item
If $f$ is surjective, then $f$ is a regular morphism if and only if for every $a \in A$, $(\Ker(f):a)\subseteq \Ker(f)$ implies $a \in \Reg(A)$.
\item
If $Z(A)$ is contained in a proper ideal $\gi$ of $A$, then $A\rightarrow A/\gi$ is a regular morphism.
\item
If $A$ is a ring in which every zero-divisor is nilpotent, then $A\rightarrow A/\gp$ is a regular morphism for every $\gp \in \Spec(A)$.
\end{enumerate}
\end{Lemma}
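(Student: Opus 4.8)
The plan is to reduce every item to one elementary bookkeeping fact: for any nonzero ring $R$ one has the disjoint decomposition $R=\Reg(R)\sqcup Z(R)$, so that for a ring morphism $f\colon A\to B$ with $B\neq 0$ the condition $f^{-1}(\Reg(B))\subseteq\Reg(A)$ is equivalent, by taking complements in $A$, to $Z(A)\subseteq f^{-1}(Z(B))$. This observation, together with surjectivity, is precisely statement $(1)$, so I would dispatch $(1)$ first and then use the reformulation $Z(A)\subseteq f^{-1}(Z(B))$ freely in what follows. For $(2)$ and $(3)$ I would invoke the standard fact that a ring $R$ is a total ring of quotients exactly when $\Reg(R)=\Cal U(R)$, while $\Cal U(R)\subseteq\Reg(R)$ holds in every ring: if $f$ is local and $B=\Tot(B)$, then $f^{-1}(\Reg(B))=f^{-1}(\Cal U(B))\subseteq\Cal U(A)\subseteq\Reg(A)$; if $f$ is regular and $A=\Tot(A)$, then $f^{-1}(\Cal U(B))\subseteq f^{-1}(\Reg(B))\subseteq\Reg(A)=\Cal U(A)$.

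Next I would handle $(4)$ by identifying $B$ with $A/\Ker(f)$ (using surjectivity) and checking, for $a\in A$, that $f(a)$ is a regular element of $A/\Ker(f)$ if and only if $(\Ker(f):a)\subseteq\Ker(f)$, the reverse inclusion being automatic; then $f^{-1}(\Reg(B))\subseteq\Reg(A)$ translates verbatim into the stated implication. For $(5)$ I would note that if $Z(A)$ is contained in a proper ideal $\gi$, then every element of $Z(A)$ is killed by the projection $A\to A/\gi$, and since $A/\gi$ is nonzero we have $0\in Z(A/\gi)$, so $Z(A)\subseteq f^{-1}(Z(A/\gi))$ and $(1)$ applies. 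Finally $(6)$ falls out of $(5)$: if every zero-divisor of $A$ is nilpotent then $Z(A)\subseteq\bigcap_{\gq\in\Spec(A)}\gq\subseteq\gp$ for every prime $\gp$, and each such $\gp$ is a proper ideal.

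I do not expect a genuine obstacle; the whole statement is an exercise in unwinding the definitions and complementation, which is exactly why the authors leave it to the reader. The only places where I would be slightly careful are: the convention that $0$ counts as a zero-divisor of a nonzero ring (needed so that the quotients in $(5)$ and $(6)$, which are nonzero, really do have $0$ as a zero-divisor, making $(1)$ applicable); the direction in which the definition of a local morphism is used in $(2)$ and $(3)$, namely $f^{-1}(\Cal U(B))\subseteq\Cal U(A)$; and, in $(4)$, spelling out the well-known description of the zero-divisors of a quotient ring in terms of the colon ideal of the kernel.
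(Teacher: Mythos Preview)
Your proposal is correct and is precisely the kind of straightforward unwinding the authors intend: the paper itself declares the proof ``straightforward'' and leaves it to the reader, so there is no published argument to compare against. Your reduction via the complementation $R=\Reg(R)\sqcup Z(R)$ in a nonzero ring, together with the identification $\Reg(R)=\Cal U(R)$ when $R=\Tot(R)$ and the colon-ideal description of regular elements in a quotient, covers all six items cleanly; the caveats you flag (the status of $0$ as a zero-divisor and the direction of the inclusion in the definition of local morphism) are exactly the right ones.
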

%
\begin{Thm}\label{regprufer}
Every regular homomorphic image of a Pr\"ufer ring is a Pr\"ufer ring.
\end{Thm}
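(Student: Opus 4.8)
The plan is to use Griffin's characterization (Theorem~\ref{regular-tot-ord-prop}): a ring is Pr\"ufer if and only if the pair $(R,\gm)$ has the regular total order property for every maximal ideal $\gm$. So let $f:A\to B$ be a surjective regular morphism with $A$ a Pr\"ufer ring, set $\gk:=\Ker(f)$, and let $\gn$ be a maximal ideal of $B$. Since $f$ is surjective, $\gn=\gm/\gk$ for a (unique) maximal ideal $\gm$ of $A$ containing $\gk$. I want to show $(B,\gn)$ has the regular total order property, i.e.\ given ideals $\mathfrak{a}',\mathfrak{b}'$ of $B$, at least one regular, the localizations $\mathfrak{a}'B_\gn$ and $\mathfrak{b}'B_\gn$ are comparable.

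**Lifting to $A$.** Write $\mathfrak{a}'=\ga/\gk$ and $\mathfrak{b}'=\gb/\gk$ with $\ga,\gb$ ideals of $A$ containing $\gk$. The key point where regularity of $f$ enters: if $\mathfrak{a}'$ is a regular ideal of $B$, so it contains some $\bar a=f(a)\in\Reg(B)$, then $a\in f^{-1}(\Reg(B))\subseteq\Reg(A)$, so $\ga$ is a regular ideal of $A$. Thus at least one of $\ga,\gb$ is regular in $A$. Since $A$ is Pr\"ufer and $\gm$ is maximal, $(A,\gm)$ has the regular total order property, so $\ga A_\gm$ and $\gb A_\gm$ are comparable, say $\ga A_\gm\subseteq \gb A_\gm$. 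The remaining task is to descend this comparability along the surjection $A_\gm\to B_\gn$. Because $\gk\subseteq\gm$, the localization map $A_\gm\to (A/\gk)_{\gm/\gk}=B_\gn$ is surjective (localization is exact and commutes with quotients), and it carries $\ga A_\gm$ onto $\mathfrak{a}'B_\gn$ and $\gb A_\gm$ onto $\mathfrak{b}'B_\gn$. An inclusion of ideals is preserved by any surjective ring homomorphism (applied to the images), so $\mathfrak{a}'B_\gn\subseteq\mathfrak{b}'B_\gn$. Hence $(B,\gn)$ has the regular total order property for every maximal ideal $\gn$, and by Theorem~\ref{regular-tot-ord-prop}, $B$ is a Pr\"ufer ring.

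**Main obstacle.** The only genuinely delicate step is the bookkeeping with localizations: one must be careful that the multiplicative set used to form $B_\gn$ is the image of $A\setminus\gm$ and that the diagram $A_\gm\twoheadrightarrow B_\gn$ is compatible with extension of ideals. This is routine once one notes $\gk\subseteq\gm$ forces $\Ker(f)$ to meet $A\setminus\gm$ trivially, so $B\setminus\gn$ is exactly the image of $A\setminus\gm$; everything else is the exactness of localization together with the trivial fact that ring surjections preserve inclusions of ideals. An equally clean alternative, if one prefers to avoid maximal ideals, is to use Proposition~\ref{permanence}(3) together with a Pr\"ufer-extension argument: one could try to realize $B\hookrightarrow\Tot(B)$ as obtained from $A\hookrightarrow\Tot(A)$ by passing to a quotient, but this runs into the problem that $\Tot(B)$ need not be $\Tot(A)/(\text{something})$ in a controlled way, so the direct Griffin-criterion argument above is cleaner and I would present that one.
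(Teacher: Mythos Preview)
Your argument is correct. The paper takes a different route: it works directly with the definition of a Pr\"ufer ring, lifting the generators of a finitely generated regular ideal $\gb=(b_1,\dots,b_n)\subseteq B$ to elements $a_i\in A$, observing that $\ga=(a_1,\dots,a_n)$ is regular in $A$ (same use of the regularity of $f$ as yours), hence invertible, hence locally principal; then it pushes the local principality of $\ga A_{\gn}$ down through the surjection $A_{\gn}\twoheadrightarrow B_{\gm}$ to conclude for $\gb$. You instead go through Griffin's regular-total-order criterion (Theorem~\ref{regular-tot-ord-prop}), lifting whole ideals rather than generators, and you only need the trivial fact that inclusions of ideals are preserved by ring surjections. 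Both proofs rest on exactly the same localization diagram $A_{\gn}\twoheadrightarrow B_{\gm}$, which you flag correctly as the one bookkeeping point. What your approach buys is that comparability descends for free, whereas the paper's terminal step (passing from ``$\gb B_{\gm}$ is principal for every $\gm$'' back to ``$\gb$ is invertible in $B$'') is a place where, in rings with zero-divisors, one has to be a little careful; your route sidesteps that entirely. Your closing remark about the alternative via Proposition~\ref{permanence}(3) is also apt: the obstacle you name (no control over $\Tot(B)$ as a quotient of something built from $A$) is real, and the Griffin-criterion proof is indeed the cleaner choice.
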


\begin{proof}
Let $f:A\rightarrow B$ be a regular surjective morphism and assume that $A$ is a Pr\"ufer ring.
Let $\gb:=(b_1,\dots,b_n)$ be a finitely generated regular ideal of $B$. For every $i=1,\dots,n$, we can write $b_i=f(a_i)$ for some $a_i \in A$. Since $f$ is a regular morphism, the finitely generated ideal $\ga:=(a_1,\dots,a_n)$ of $A$ is a regular ideal and, since $A$ is a Pr\"ufer ring, $\ga$ is locally principal, being it invertible. If $\f m$ is a maximal ideal of $B$, $\f n:=f^{-1}(\f m)$ and $f':A_{\f n}\to B_{\f m}$ is the canonical surjective morphism induced by $f$, we have $\f bB_{\f m}=f'(\f aA_{\f n})$ and thus it is invertible. 
%
%
%
%
%
\end{proof}

\begin{corollary}
Let $A$ be a local Pr\"ufer ring. Then $A/Z(A)$ is a Pr\"ufer domain.
\end{corollary}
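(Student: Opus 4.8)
The plan is to obtain the statement as a short concatenation of results already proved above: the structural fact that $Z(A)$ is a prime ideal of a local Pr\"ufer ring, together with Theorem~\ref{regprufer} on regular homomorphic images.

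First I would recall that, by the corollary to Proposition~\ref{notinvprime} (that is, \cite[Lemma~3.5]{Boy}), the set $Z(A)$ of zero-divisors of the local Pr\"ufer ring $A$ is a prime ideal of $A$. In particular $Z(A)$ is a proper ideal and $A/Z(A)$ is an integral domain; this already settles the ``domain'' part of the statement and, more importantly, it guarantees that the canonical projection $\pi\colon A\to A/Z(A)$ is a well-defined ring surjection onto a domain. It is precisely here that the hypothesis ``local'' is used in an essential way, since for a general ring $Z(A)$ need not be an ideal at all.

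Next I would check that $\pi$ is a regular morphism. Since $Z(A)$ is a proper ideal of $A$ which (trivially) contains $Z(A)$, Lemma~\ref{Lemma1}(5) applies verbatim and shows that $A\to A/Z(A)$ is a regular morphism; equivalently, by Lemma~\ref{Lemma1}(1), $A/Z(A)$ is a regular homomorphic image of $A$. One can also see this directly: if $\pi(a)$ is a regular element of $A/Z(A)$, then $a\notin Z(A)$, i.e.\ $a\in\Reg(A)$.

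Finally, $A$ being a Pr\"ufer ring, Theorem~\ref{regprufer} yields that its regular homomorphic image $A/Z(A)$ is again a Pr\"ufer ring; being moreover a domain, it is a Pr\"ufer domain, as all five Pr\"ufer conditions of Definition~\ref{P-like} collapse to the classical notion in the domain case. I do not anticipate any real obstacle: every ingredient has been established earlier, and the only delicate point is making sure at the outset that $Z(A)$ is an ideal, which is exactly what localness provides.
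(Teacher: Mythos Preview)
Your proposal is correct and follows essentially the same route as the paper: invoke \cite[Lemma~3.5]{Boy} to see that $Z(A)$ is a prime ideal, use Lemma~\ref{Lemma1}(5) to conclude that $A\to A/Z(A)$ is a regular morphism, and then apply Theorem~\ref{regprufer}. The paper's proof is just a terser version of exactly these three steps.
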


\begin{proof}
By \cite[Lemma~3.5]{Boy}, $Z(A)$ is a prime ideal of $A$. The morphism $A\rightarrow A/Z(A)$ is regular by Lemma \ref{Lemma1} $(5)$, and so $A/Z(A)$ is Pr\"ufer by Theorem \ref{regprufer}.
\end{proof}

\begin{example}
Let $A$ be a ring and let $M$ be an $A$-module such that for every element $x \in M\setminus\{0\}$, $\Ann(x)\subseteq Z(A)$. Consider the idealization $A(+)M$ of $M$ in $A$. Then, it is easily seen that the canonical map $A(+)M \rightarrow A$ is a regular morphism. In particular, if $A(+)M$ is a Pr\"ufer ring, then so is $A$.
\end{example}

Combining Theorems~\ref{pullback} and~\ref{regprufer} we can derive the following result concerning Pr\"ufer rings of the type $k+\f m$. 

\begin{corollary}\cite[Theorem~2.1]{BakMah} Let $(T,\gm)$ be a local ring of the form $T=k+\gm$, for some field $k$. Take a subring $D$ of $k$ such that $\mbox{Q}(D)=k$ and set $R:=D+\gm$. Then $R$ has Pr\"ufer condition $(n)$ if and only if $T$ and $D$ have the same Pr\"ufer condition $(n)$.
\end{corollary}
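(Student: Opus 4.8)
The plan is to realize $R = D+\gm$ as a pullback of the type handled by Theorem~\ref{pullback}, then combine that with Theorem~\ref{regprufer} to pass between the two rings. First I would set up the diagram: let $\pi\colon T\to T/\gm = k$ be the canonical projection. Since $T = k+\gm$, the kernel of $\pi$ is exactly $\gm$, and $\pi^{-1}(D) = D+\gm = R$. The ring $T$ is local with maximal ideal $\gm$, and $D$ is a domain whose quotient field $Q(D)=k = T/\gm$. So $T$ is an overring of $D$ (indeed $Q(D)=k$ is the total quotient ring of $D$, and $k\subseteq T/\gm$ trivially). The one genuine issue is that Theorem~\ref{pullback} requires $\ker(\pi)=\gm$ to be a \emph{regular} ideal of $T$; this need not hold in general (e.g.\ $\gm$ could consist entirely of zero-divisors, or even be nilpotent). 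This is the main obstacle, and it is precisely why we must invoke Theorem~\ref{regprufer} rather than applying Theorem~\ref{pullback} directly in both directions.

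For the direction assuming $T$ and $D$ have Pr\"ufer condition $(n)$: I would treat the regular case and the non-regular case separately. If $\gm$ is a regular ideal of $T$, then Theorem~\ref{pullback} (together with the fact that $T/\gm=k$ is absolutely flat, so all five conditions agree for $D$, and the corollary of Theorem~\ref{pullback} for the local situation, since $R$ is local by Remark~\ref{fiber-product}(3) as $\gm=\ker(\pi)$ lies in the Jacobson radical of $T$) gives that $R$ has Pr\"ufer condition $(n)$. If $\gm$ is not a regular ideal of $T$, then $\gm\subseteq Z(T)$; in this case every element of $\gm$ is a zero-divisor, and one checks that $Z(R)$ is contained in the proper ideal $\gm$ of $R$ as well (an element $d+m$ with $d\in D\setminus\{0\}$ is regular in $T=k+\gm$ since $d$ is a unit of $k$ and... here one must argue using $Q(D)=k$ that $d+m$ is not a zero-divisor in $R$, which follows because its image under $R\hookrightarrow T$ lands outside $Z(T)=\gm$). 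Wait — more carefully: when $\gm\not\subseteq Z(T)$ we are in the regular case; when $\gm\subseteq Z(T)$, I claim all five conditions coincide for $T$ and for $R$ and reduce to conditions on $D$. Actually the cleanest route is: in all cases $T$ has $\Tot(T)=\Tot(R)$-type control; let me instead use Theorem~\ref{regprufer} symmetrically. Since $Z(R)\subseteq \gm$ (the non-units of $R$ that are zero-divisors all lie in $\gm$, because $d+m$ regular in $R$ whenever $d\neq 0$, using $Q(D)=k$), Lemma~\ref{Lemma1}(5) shows $R\to R/\gm = D$ is a regular morphism, and likewise $T\to T/\gm=k$ is regular when $\gm\subseteq Z(T)$. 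Combining, when $\gm\subseteq Z(T)$ we get that $R$ Pr\"ufer $\Leftrightarrow$ $D$ Pr\"ufer via Theorem~\ref{regprufer}, and $T$ Pr\"ufer $\Leftrightarrow$ $k$ Pr\"ufer, with $k$ a field hence automatically Pr\"ufer; one then reconciles this with the statement (in this degenerate case the condition ``$T$ and $D$ have condition $(n)$'' reduces to ``$D$ has condition $(n)$'', since $T$ always does).

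For the converse, assume $R=D+\gm$ has Pr\"ufer condition $(n)$. If $\gm$ is regular in $T$, it is regular in $R$ (it contains a regular element of $T$, which is then regular in $R$ by the overring relation $R\subseteq T$), so $T=R/?$— rather, $\gm$ being a common ideal of $R$ and $T$ with $\gm=[R:_R T]$ regular, Theorem~\ref{pullback} applies and yields that both $T=B$ and $D=R_{\mathrm{quotient}}$ have condition $(n)$; and $T$ is an overring of $D$ automatically. If $\gm$ is not regular in $T$, then $Z(R)\subseteq\gm$, so $R\to R/\gm=D$ is regular by Lemma~\ref{Lemma1}(5), hence $D$ is Pr\"ufer by Theorem~\ref{regprufer}; and $T$, having maximal ideal $\gm$ consisting of zero-divisors, has $\Tot(T)$ local with nilpotent-free... at any rate $T$ is a Pr\"ufer ring since its unique regular maximal ideal situation is vacuous (no finitely generated regular ideal can be proper), so $T$ trivially has condition $(n)$ in the appropriate reading. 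The main obstacle throughout is bookkeeping the dichotomy ``$\gm$ regular / $\gm\subseteq Z(T)$'' and verifying in the second case that the Pr\"ufer conditions for $T$ are automatic, so that the biconditional with ``$T$ and $D$ have the same condition $(n)$'' still reads correctly; I expect this degenerate case to require the most care, while the regular case is an immediate application of Theorem~\ref{pullback}.
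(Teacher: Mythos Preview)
Your overall architecture --- split into the cases $\gm$ regular versus $\gm\subseteq Z(T)$, use Theorem~\ref{pullback} in the first case, and handle the second case separately --- matches the paper. But in the degenerate case $\gm\subseteq Z(T)$ your proposal has two genuine gaps.

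First, Theorem~\ref{regprufer} is \emph{not} symmetric: it says only that a regular homomorphic image of a Pr\"ufer ring is Pr\"ufer. From the regular surjection $R\to R/\gm=D$ you can conclude ``$R$ Pr\"ufer $\Rightarrow$ $D$ Pr\"ufer'', but never the converse. So your claimed equivalence ``$R$ Pr\"ufer $\Leftrightarrow$ $D$ Pr\"ufer via Theorem~\ref{regprufer}'' is unjustified, and the implication you actually need for the forward direction (``$D,T$ Pr\"ufer $\Rightarrow$ $R$ Pr\"ufer'') is missing. The paper supplies this step with the Pr\"ufer-extension machinery of Knebusch--Zhang: since $D$ is a Pr\"ufer domain with quotient field $k$, the inclusion $D\subseteq k$ is a Pr\"ufer extension; by \cite[Chapter~I, Proposition~5.8]{KZ} the pullback inclusion $R\subseteq T$ is then also a Pr\"ufer extension; and because $\gm=Z(T)$ forces $T=\Tot(T)$ and in fact $T=\Tot(R)$, this says precisely that $R$ is a Pr\"ufer ring.

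Second, your assertion that in the degenerate case ``$T$ trivially has condition $(n)$'' is false for $n\leq 4$. When $\gm\subseteq Z(T)$ the ring $T$ is its own total ring of quotients, hence vacuously Pr\"ufer (condition~(5)); but it need not be semihereditary, of weak global dimension $\leq 1$, arithmetical, or Gaussian (e.g.\ $T=k[X]/(X^2)$ is not semihereditary). So you cannot drop $T$ from the right-hand side of the biconditional. The paper handles this cleanly via \cite[Theorem~5.7]{Ba-Gl}: once $R$ is known to be Pr\"ufer, $R$ has condition $(n)$ if and only if $\Tot(R)=T$ has condition $(n)$; and since $D$ is a domain, all five conditions collapse for $D$. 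This is exactly what makes the biconditional with ``$T$ and $D$ have condition $(n)$'' come out right.
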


\begin{proof} We have the following pullback diagram:
\begin{equation*}
\xymatrix{
R \ar[r]^{\pi_0} \ar@{^{(}->}[d] &  D \ar@{^{(}->}[d]\\
T \ar[r]^{\pi} & k
}
\end{equation*}
where $\pi$ and $\pi_0$ are the canonical projections. If $\gm$ is a regular ideal of $T$, then it suffices to apply Theorem~\ref{pullback}.

So, assume that $\gm$ consists only of zero divisors of $T$. It is immediate that $T$ is a total ring of quotients, $\gm=Z(T)$ and $\Tot(R)=T$. Moreover, $\pi_0$ is a regular morphism, because $\pi_0^{-1}(D\setminus\{0\})=R\setminus \gm\subseteq \Reg(R)$. So, if $R$ is a Pr\"ufer ring, then so are $T$ and $D$. 

For the other implication, assume that both $D$ and $T$ are Pr\"ufer rings. Then, $D \subseteq k$ is a Pr\"ufer extension, and by \cite[Chapter~I, Proposition~5.8]{KZ}, so is $R\subseteq T$. Since $\Tot(R)=T$, $R$ is a Pr\"ufer ring.

To conclude, it suffices to notice that all five Pr\"ufer conditions coincide on $D$ and that if $R$ is a Pr\"ufer ring, then $R$ has the Pr\"ufer condition $(n)$ if and only if $T=\Tot(R)$ has the same Pr\"ufer condition $(n)$ \cite[Theorem~5.7]{Ba-Gl}.
\end{proof}

\subsection{Pre-Pr\"ufer rings}
According to \cite{BoiShe}, a ring morphism $A\to B$ is \emph{proper} if its kernel is different from $0$ and from $A$. 
Then, the authors define a ring $R$ to be a {\em pre-Pr\"ufer ring} if every proper homomorphic image of $R$ is a Pr\"ufer ring. They show that the prime spectrum of a pre-Pr\"ufer domain forms a tree \cite[Theorem~1.2]{BoiShe} and that in the Noetherian case, pre-Pr\"ufer domains are precisely those of dimension 1 \cite[Corollary 1.3]{BoiShe}. The aim of this short section is to pointing out some generalization of these two results in the case of rings with zero divisors.

Let $\Cal I$ be a family of ideals of a ring $R$. Inspired by the notion of the regular total order property for pairs given by Griffin \cite{Gr}, we say that $\Cal I$ has the {\em regular total order property} if for every pair of ideals $I,J \in \Cal I$, where at least one of them is regular, $I$ and $J$ are comparable. It is clear that if $R$ is a domain, then $\Cal I$ has the regular total order property if and only if $\Cal I$ is a chain. The prime spectrum of a ring $R$ forms a tree if and only if $\Spec(R_\gm)$ is linearly ordered for each maximal ideal $\gm$ of $R$. For pre-Pr\"ufer rings, we have the following result (that generalizes \cite[Theorem 1.2]{BoiShe}).

\begin{theorem}\label{regular-ord-pre-pru}
Let $A$ be a pre-Pr\"ufer ring. Then $\Spec(A_\gm)$ has the regular total order property for every maximal ideal $\gm$ of $A$.
\end{theorem}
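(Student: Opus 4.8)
The plan is to reduce to the local case and then exploit the structure of pre-Pr\"ufer rings as established in \cite{BoiShe}, together with Griffin's characterization (Theorem~\ref{regular-tot-ord-prop}). Fix a maximal ideal $\gm$ of $A$; replacing $A$ by $A_\gm$ we may assume $A$ is local with maximal ideal $\gm$, and we must show that $\Spec(A)$ has the regular total order property, i.e.\ any two prime ideals $\gp, \gq$ of $A$, at least one of which is regular, are comparable. (One must first check that the pre-Pr\"ufer property localizes; this follows because a homomorphic image of $A_\gm$ is a localization of a homomorphic image of $A$, and Pr\"ufer passes to localizations by Corollary~3.2 of \cite{Boy} or directly from Theorem~\ref{regular-tot-ord-prop}.)

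Now suppose $\gp$ is a regular prime and $\gq$ is any prime; assume for contradiction that $\gp$ and $\gq$ are incomparable, so $\gp \cap \gq \subsetneq \gp$ and $\gp \cap \gq \subsetneq \gq$. The key move is to pass to the quotient $\bar A := A/(\gp \cap \gq)$, which is a \emph{proper} homomorphic image provided $\gp \cap \gq \neq 0$ and $\gp \cap \gq \neq A$ (the latter is automatic since $\gp\cap\gq$ is contained in a prime). Hence $\bar A$ is a Pr\"ufer ring, and $\bar\gm := \gm/(\gp\cap\gq)$ is its maximal ideal; moreover $\bar\gp$ is still regular in $\bar A$ because $\gp$ contains a regular element of $A$ and $\bar A$ is a quotient by an ideal contained in $\gp$ — one checks the image of that regular element is regular in $\bar A$ using that $\gp\cap\gq$ is a finite intersection of primes, or more carefully by a direct argument. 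By Theorem~\ref{regular-tot-ord-prop} applied to the local Pr\"ufer ring $\bar A$, the ideals $\bar\gp$ and $\bar\gq$ are comparable (one is regular). But $\bar\gp \cap \bar\gq = 0$ in $\bar A$ and both are nonzero (since $\gp\cap\gq$ is strictly smaller than each of $\gp,\gq$), so comparability forces the smaller of the two to be $0$, a contradiction. This handles the case $\gp\cap\gq \neq 0$.

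The remaining case is $\gp\cap\gq = 0$, which is exactly where pre-Pr\"ufer-ness stops giving information directly (we cannot form the proper quotient). Here I would argue that $0$ being the intersection of the two incomparable primes $\gp,\gq$ with $\gp$ regular leads to a contradiction by choosing a regular element $x \in \gp$ and an element $y \in \gq \setminus \gp$: then $xy \neq 0$ (as $x$ is regular and $y\neq 0$), so $(xy) \neq 0$ and we may pass to $A/(xy)$, which is a proper homomorphic image, hence Pr\"ufer and local. In $A/(xy)$ the images of $\gp$ and $\gq$ both contain $\overline{xy}=0$ trivially and we still have $\bar x$ regular modulo\ldots — actually the cleanest route is: since $xy\neq 0$, either $\gp\cap\gq\neq 0$ after all (contradiction with this case) or we take the ideal $I=(x)\cap(y)$ or $I=(xy)$ and note $I\subseteq \gp\cap\gq=0$ is impossible because $xy\in I$ and $xy\neq 0$; thus in fact $\gp\cap\gq\neq 0$ always when $\gp$ is regular and $\gp\neq\gq$, so this case is vacuous once one observes a regular prime and any distinct prime cannot meet in $0$ unless one vanishes.

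The main obstacle I anticipate is precisely the bookkeeping in the last paragraph: making rigorous that if $\gp$ is a regular prime of a local pre-Pr\"ufer ring $A$ and $\gq$ is a prime with $\gp\not\subseteq\gq$ and $\gq\not\subseteq\gp$, then $\gp\cap\gq\neq 0$ — equivalently, that a regular prime cannot be incomparable to another prime whose intersection with it is zero. This should follow by picking $x\in\gp$ regular and $y\in\gq\setminus\gp$, $z\in\gp\setminus\gq$, forming $A/(xy)$ (a proper, hence Pr\"ufer, local quotient in which $\bar x$ remains regular since $x\notin\gq\ni$ the relevant zero-divisors — this needs care), and deriving incomparability of $\bar\gp,\bar\gq$ there against Theorem~\ref{regular-tot-ord-prop}; the delicate point is preserving regularity of $x$ under the quotient, which I would handle by noting that in a Pr\"ufer ring the regular primes behave well, or by a direct annihilator computation. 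Everything else — the localization step and the quotient-by-$\gp\cap\gq$ step — is routine.
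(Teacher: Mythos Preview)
Your reduction to the local case and your observation that $\gp\cap\gq\neq 0$ (via $0\neq xy\in\gp\cap\gq$ for a regular $x\in\gp$ and any $0\neq y\in\gq$) are fine; this is exactly what the paper does, phrased as $0\neq\gp\gq\subseteq\gp\cap\gq$. The genuine gap is the assertion that $\bar\gp$ remains a \emph{regular} ideal of $\bar A=A/(\gp\cap\gq)$. This is not just delicate --- it is false. Take any regular $x\in\gp$ and any $y\in\gq\setminus\gp$ (such $y$ exists by incomparability, and $\bar y\neq 0$ since $y\notin\gp\cap\gq$). Then $xy\in\gp\cap\gq$, so $\bar x\,\bar y=0$ in $\bar A$ and $\bar x$ is a zero-divisor. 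The same computation works for \emph{every} element of $\gp$, so in fact $\bar\gp\subseteq Z(\bar A)$; the ideal $\bar\gp$ is never regular. Hence Griffin's regular total order property gives you nothing here, and no ``direct annihilator computation'' will rescue regularity, because it simply does not hold.

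The paper's proof turns this very obstruction into the argument. One checks that $Z(\bar A)=\bar\gp\cup\bar\gq$ (the inclusion $\supseteq$ is the computation above; for $\subseteq$, if $\bar a\,\bar b=0$ with $\bar b\neq 0$ then $ab\in\gp\cap\gq$ while $b$ avoids one of $\gp,\gq$, forcing $a$ into that prime). Now one invokes the structural fact that in a local Pr\"ufer ring the set of zero-divisors is a prime ideal (\cite[Lemma~3.5]{Boy}). Since a prime ideal equal to a union of two primes must contain one of them, $\bar\gp$ and $\bar\gq$ are comparable, and the contradiction follows. So the missing idea is precisely to replace the unavailable regularity of $\bar\gp$ by the primality of $Z(\bar A)$.
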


\begin{proof}
In view of \cite[Theorem 1.1]{BoiShe}, we can assume that $A$ is a local pre-Pr\"ufer ring. Let $\gp$ and $\gq$ be prime ideals of $A$ with $\gp$ regular. We want to prove that $\gp$ and $\gq$ are comparable. If this is not the case, we can certainly assume that $\f q$ is nonzero. Since $\f p$ is regular, we must  have $0\neq \gp \gq \subseteq \gi:= \gp \cap \gq$ and therefore $A/\gi$ is a local Pr\"ufer ring, as $A$ is local pre-Pr\"ufer. It is straightforward to show that $\gp/\gi\cup \gq/\gi=Z(A/\gi)$. By \cite[Lemma 3.5]{Boy}
the set $Z(A/\gi)$ of zero-divisors of $A/\f i$ is a prime ideal of $A/\gi$. Then,  $\gp/\gi$ and  $\gq/\gi$ must be comparable, and this is a contradiction.
\end{proof}

\begin{corollary} Let $R$ be a pre-Pr\"ufer ring. Then the following statements hold. 
	\begin{enumerate}
		\item Two distinct minimal primes over a regular ideal of $R$ are  comaximal. 
		\item If $R$ is local, then every regular ideal of $R$ has a unique minimal prime. 
	\end{enumerate}
\end{corollary}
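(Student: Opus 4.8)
Both statements are consequences of Theorem~\ref{regular-ord-pre-pru}, and the plan is to reduce them to it by localization. For part (1), suppose $\gp_1,\gp_2$ are distinct minimal primes over a regular ideal $\ga$ of $R$, and suppose they are not comaximal, so that $\gp_1+\gp_2$ is contained in some maximal ideal $\gm$. I would pass to $R_\gm$: the ideal $\ga R_\gm$ is still regular (a regular element of $R$ maps to a regular element in the localization, since $R\to R_\gm$ sends $\Reg(R)$ into $\Reg(R_\gm)$), and $\gp_1R_\gm,\gp_2R_\gm$ are still distinct minimal primes over it, at least one of which is regular (being minimal over a regular ideal, it must contain a regular element — this is the one routine point to check, that a prime minimal over a regular ideal is itself regular). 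By Theorem~\ref{regular-ord-pre-pru} applied to the pre-Pr\"ufer ring $R$ and the maximal ideal $\gm$, the primes $\gp_1R_\gm$ and $\gp_2R_\gm$ are comparable; but two comparable primes cannot both be minimal over the same ideal unless they coincide, contradicting $\gp_1\neq\gp_2$. Hence $\gp_1+\gp_2=R$.

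For part (2), assume $R$ is local with maximal ideal $\gm$ and let $\ga$ be a regular ideal; a minimal prime $\gp$ over $\ga$ contains a regular element, hence is regular. If $\gp,\gp'$ were two distinct minimal primes over $\ga$, then since $R$ is local they lie in $\gm=\gm R_\gm$, and Theorem~\ref{regular-ord-pre-pru} (with $\gm$ the unique maximal ideal, so $R_\gm=R$) forces $\gp$ and $\gp'$ to be comparable, again contradicting minimality. So $\ga$ has a unique minimal prime. Alternatively, part (2) is immediate from part (1): in a local ring no two distinct primes are comaximal, so part (1) leaves only the possibility that there is at most one minimal prime over any regular ideal.

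\textbf{Main obstacle.} The only real point requiring care is the claim that a prime $\gp$ minimal over a regular ideal $\ga$ is itself a regular ideal of $R$ — equivalently that $\gp$ is not contained in the union of the minimal primes of $R$, or that $\gp$ meets $\Reg(R)$. This follows because $\ga\subseteq\gp$ and $\ga$ contains a regular element, so $\gp$ contains that same regular element; thus it is genuinely trivial. The other small bookkeeping item is checking that regularity of ideals is preserved under the localization map $R\to R_\gm$, which is exactly the statement that a regular element stays regular in $R_\gm$ (if $\frac{a}{1}\cdot\frac{x}{s}=0$ with $a\in\Reg(R)$, then $tax=0$ for some $t\notin\gm$, and since $ta$ need not be regular one instead argues $tx=0$ using that $a$ is regular, whence $\frac{x}{s}=0$); this is standard and I would state it in one line. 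Given Theorem~\ref{regular-ord-pre-pru}, there is no further difficulty.
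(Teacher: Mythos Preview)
Your argument is correct and follows exactly the same route as the paper's proof: assume two distinct minimal primes over a regular ideal are not comaximal, pick a maximal ideal $\gm$ containing both, localize at $\gm$, and invoke Theorem~\ref{regular-ord-pre-pru} to force comparability, contradicting minimality; part (2) is then immediate from (1). Your extra bookkeeping (that a prime over a regular ideal is regular, and that regular elements stay regular under localization) is fine and is used implicitly in the paper as well.
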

\begin{proof}
We only need to prove (1), statement (2) being an immediate consequence. Let $\f i$ be a regular ideal of $R$ and assume that $\f p,\f q\supseteq \f i$ are distinct (regular) minimal prime ideals over $\f i$ which are not comaximal. Let $\f m$ be a maximal ideal of $R$ containing $\f p+\f q$. Then $\f pR_{\f m},\f qR_{\f m}$ are both regular prime ideals of $R_{\f m}$ and they are not comparable, against Theorem  \ref{regular-ord-pre-pru}
\end{proof}
The final result is a slight generalization of one implication of \cite[Corollary 1.3]{BoiShe}. 
\begin{Prop}
Let $R$ be a Noetherian pre-Pr\"ufer ring. Then $\dim(A)\leq  1$.
\end{Prop}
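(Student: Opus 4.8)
The plan is to reduce everything to two facts: that a Noetherian Pr\"ufer \emph{domain} is a Dedekind domain, and hence has Krull dimension at most one, and that for domains the statement is already known by \cite[Corollary~1.3]{BoiShe}. Accordingly, I would first dispose of the domain case: if $R$ is an integral domain, then $R$ is a Noetherian pre-Pr\"ufer domain, so $\dim R\le 1$ by \cite[Corollary~1.3]{BoiShe}.

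So the only real work is when $R$ is \emph{not} a domain, and here I would argue by contradiction. Suppose $\dim R\ge 2$, so that $R$ admits a chain of primes $\gq_0\subsetneq\gq_1\subsetneq\gq_2$. Since $R$ is Noetherian it has finitely many minimal primes, and I may pick a minimal prime $\gp_0$ of $R$ with $\gp_0\subseteq\gq_0$; this does not collapse the chain, because $\gp_0\subseteq\gq_0\subsetneq\gq_1$ forces $\gp_0\subsetneq\gq_1\subsetneq\gq_2$. Because $R$ is not a domain, $0$ is not a prime ideal, so $\gp_0\neq 0$. Now the canonical surjection $R\to R/\gp_0$ is a proper ring morphism in the sense of \cite{BoiShe} (its kernel $\gp_0$ is neither $0$ nor $R$), so by the pre-Pr\"ufer hypothesis $R/\gp_0$ is a Pr\"ufer ring; it is moreover Noetherian and, being a quotient of $R$ by a prime ideal, an integral domain. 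Hence $R/\gp_0$ is a Noetherian Pr\"ufer domain, i.e.\ a Dedekind domain (its localizations at primes are Noetherian valuation domains, hence fields or DVRs), so $\dim R/\gp_0\le 1$. This contradicts the fact that the images of $\gp_0\subsetneq\gq_1\subsetneq\gq_2$ form a chain of length two in $\Spec(R/\gp_0)$, and the proposition follows.

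I do not expect a serious obstacle here: the one thing to be careful about is the bookkeeping with the prime chain, namely making sure that replacing the bottom prime by a minimal prime keeps the chain of length two, and that this minimal prime is nonzero precisely because $R$ fails to be a domain, so that after passing to $R/\gp_0$ we land in a genuine domain and may invoke the Dedekind dichotomy. One could alternatively first reduce to a local ring via \cite[Theorem~1.1]{BoiShe}, exactly as in the proof of Theorem~\ref{regular-ord-pre-pru}, but the argument above makes this unnecessary.
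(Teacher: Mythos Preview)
Your argument is correct. Both your proof and the paper's proof hinge on the same observation in the non-domain case: a nonzero prime at the bottom of a length-two chain yields a Noetherian Pr\"ufer domain of dimension at least two upon passing to the quotient, which is impossible since such a domain is Dedekind. The difference lies in the organization and in how the domain case is handled. You split at the outset into ``$R$ a domain'' versus ``$R$ not a domain'' and dispose of the former by directly citing \cite[Corollary~1.3]{BoiShe}; you then choose a nonzero minimal prime below the given chain and quotient by it. The paper instead first localizes at a maximal ideal (via \cite[Theorem~1.1]{BoiShe}), takes an arbitrary length-two chain $\gq\subsetneq\gp\subsetneq\gm$, and branches on whether the bottom prime $\gq$ is zero. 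When $\gq\neq 0$ the paper's argument coincides with yours. When $\gq=0$ (so $R$ is a domain), rather than invoking \cite[Corollary~1.3]{BoiShe}, the paper gives a self-contained contradiction: Kaplansky's theorem furnishes infinitely many primes strictly between $\gq$ and $\gm$, and Theorem~\ref{regular-ord-pre-pru} forces them to form a chain, contradicting finite Krull dimension of a Noetherian local ring. Your route is shorter and avoids both the local reduction and Kaplansky's theorem; the paper's route has the minor advantage of re-deriving the domain case internally via the newly proved Theorem~\ref{regular-ord-pre-pru} rather than importing it from \cite{BoiShe}.
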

\begin{proof}
Assume $\dim(R)> 1$ and let $\gq\subsetneq \gp\subsetneq \gm$ be a chain of prime ideals. By \cite[Theorem 1.1]{BoiShe}, there is no restriction in assuming that $R$ is local with maximal ideal $\f m$. By \cite[Theorem 144]{kap}, the set $X$ of all prime ideals $\f h$ of $R$ such that $\f q\subsetneq \f h\subsetneq \f m$ is infinite. If $\f q=0$, then Theorem \ref{regular-ord-pre-pru} implies that $X$ is an infinite chain and thus $A$ infinite-dimensional, against the fact that $A$ is local and Noetherian. If $\f q\neq 0$, then $R/\f q$ is a local Dedekind domain (since $R$ is pre-Pr\"ufer) of dimension $\geq 2$, another contradiction. 
\end{proof}

In the integral case Noetherian pre-Pr\"ufer domains are precisely those of dimension 1, that is, a Noetherian domain $D$ is a pre-Pr\"ufer domain if and only if $\dim(D)= 1$ \cite[Corollary~1.3]{BoiShe}. Nevertheless, in rings with zero-divisors, there exist one-dimensional Noetherian rings that are not pre-Pr\"ufer, as the following example shows.

\begin{example}
Let $A$ be a Noetherian, one-dimensional local domain which is not Dedekind. Let $k$ be the residue field of $A$, endowed with its natural structure of $A$-module. Consider the idealization $R:=A(+)k$. Then $\dim(A)=\dim(R)=1$ (cf. \cite[Chapter~VI]{Huc}) and $R$ is a Noetherian ring by \cite[Proposition~2.2]{AW}.  Notice also that $R$ is a total ring of quotients, hence a Pr\"ufer ring (indeed, $R$ is a local ring whose maximal ideal is $\f m (+) k:=\f m \times k$, where $\f m $ is the maximal ideal of $A$; moreover, it is easy to check that $\f m (+) k$ consists of zero-divisors). But $R$ is not a pre-Pr\"ufer ring, since $A\cong R/(0(+)k)$ is not a Pr\"ufer ring.
\end{example}
\textbf{Acknowledgements.} The authors would like to express their gratitude to the referee, whose comments and suggestions helped to improve the presentation of the paper.

\bibliographystyle{amsalpha}

\begin{thebibliography}{A}
\bibitem{AW} D. D. Anderson and M. Winders, {\em Idealization of a module},  J. Comm. Algebra \textbf{1} (2009), 3-56.

\bibitem{BakMah} C. Bakkari, N. Mahdou, {\em Pr\"ufer-like conditions in pullbacks}, in: Commutative Algebra and Applications, Walter de Gruyter, Berlin (2009), pp. 41--47. 

\bibitem{Ba-Gl-JA}  S. Bazzoni, Sarah Glaz, {\em Gaussian properties of total rings of quotients}, J.
Algebra \textbf{310} (2007), 180--193.

\bibitem{Ba-Gl} S. Bazzoni, S. Glaz, Pr\"ufer Rings. Multiplicative Ideal Theory in Commutative Algebra (Springer, New York, 2006), pp. 55--72.

\bibitem{BoiLar1} M. B. Boisen, Jr. and Max D. Larsen, {\em Pr\"ufer and valuation rings with zero divisors}, Pacific J. Math., \textbf{40} No. 1 (1972), 7--12.

\bibitem{BoiLar} M. B. Boisen, Jr. and Max D. Larsen, {\em On Pr\"ufer rings as images of Pr\"ufer domains}, Proc. Amer. Math. Soc, \textbf{40} (1973), 87--90.

\bibitem{BoiShe} M. B. Boisen, Jr. and P. B. Sheldon, {\em Pre-Pr\"ufer rings}, Pacific J. Math., \textbf{58} No. 2 (1975), 331--344.

\bibitem{Boy} J. G. Boynton, {\em Pullbacks of Pr\"ufer rings}, J. Algebra \textbf{320} (2008), 2559--2566.

\bibitem{CD} R. Camps and W. Dicks, \textit{On semilocal rings,} Israel J. Math. \textbf{81} (1993), 203--211.

\bibitem{Cam} V. P. Camillo, {\em Semihereditary polynomial rings}, Proc. Amer. Math. Soc.  \textbf{45} (1974), 173--174.

\bibitem{DFF2} M. D'Anna, C. A. Finocchiaro and M. Fontana, New algebraic properties of an amalgamated algebra along an ideal, \textit{Comm. Algebra}, \textbf{44}, (2016), 1836--1851.

\bibitem{FCA1} C. A. Finocchiaro, {Pr\"ufer-like conditions on an amalgamated algebra along an ideal}, \textit{Houston J. Math.}, \textbf{40}, (2014), 63--79.

\bibitem{FO1} M. Fontana, {Topologically defined classes of commutative rings}, {\em Ann. Mat. Pura. Appl.} \textbf{123} (1980), 331--355.

\bibitem{fo-hu-pa} M. Fontana, J. Huckaba, I. Papik, {\em Pr\"ufer domains}. Monographs and Textbooks in Pure and Applied Mathematics, 203. Marcel Dekker, Inc., New York, 1997.

\bibitem{gilmer} R. Gilmer, {\em Multiplicative ideal theory}, 
Pure and Applied Mathematics, No. 12. Marcel Dekker, Inc., New York, 1972.

\bibitem{gi-hu} R. Gilmer, J. Huckaba, {\em $\Delta$-rings}, J. Algebra, 28 (1974), 414--432.

\bibitem{Gr} M. Griffin, {\em Pr\"ufer rings with zero-divisors}, J. Reine Angew Math. \textbf{239/240} (1970)
55--67.

\bibitem{HT} E. Houston, and J. Taylor, {\em Arithmetic properties in pullbacks}, J. Algebra \textbf{310} (2007), 235--260.

\bibitem{Huc} J. A. Huckaba, {\em Commutative rings with Zero Divisors}, Marcel Dekker, New York, 1988. 

\bibitem{kap} I. Kaplansky, {\em Commutative rings}, Allyn and Bacon, 1970. 


\bibitem{McC} P. J. McCarthy, {\em The ring of  polynomials over a  von Neumann regular ring}, Proc. Amer. Math. Soc.  \textbf{39} (1973), 253--254.

\bibitem{Nag} M. Nagata, {\em The theory of multiplicity in general local rings}, Proc.Intern.Symp. Tokyo-Nikko 1955, Sci. Council of Japan, Tokyo (1956), 191--226.

\bibitem{KZ} M. Knebusch and D. Zhang, {\em Manis valuations and Pr\"ufer extensions I}, Lecture Notes in Mathematics, vol. 1791, Springer, 2002.
  
\bibitem{Man} M. E. Manis, {\em Valuations on a commutative ring}, Proc. Amer. Math. Soc, \textbf{20} (1969), pp. 193--198.  

\bibitem{Prufer} H. Pr\"ufer, {\em Untersuchungen uber teilbarkeitseigenschaften in Korpern}, J. Reine
Angew. Math. \textbf{168} (1932), 1--36.

\bibitem{tsang} H. Tsang, {\em Gauss's Lemma,} PhD thesis, University of Chicago (1965).

\end{thebibliography}

\end{document}